\title{Sharpness and non-sharpness of occupation measure bounds for integral variational problems\thanks{%Submitted to the editors
		\today.\funding{G.F.\ was supported by an Imperial College Research Fellowship. I.T.\ was supported by National Science Foundation Award DMS-2025000.}}}
\author{Giovanni Fantuzzi\thanks{Mathematics, Friedrich-Alexander-Universit\"at Erlangen--N\"urnberg (\email{giovanni.fantuzzi@fau.de})}
  \and Ian Tobasco\thanks{Mathematics, Statistics \& Computer Science, University of Illinois at Chicago (\email{itobasco@uic.edu})}
}
\crefname{subsection}{section}{sections}
\Crefname{subsection}{Section}{Sections}
\crefname{hypothesis}{Hypothesis}{Hypotheses}
\DeclareMathOperator{\rank}{rank}
\DeclareMathOperator{\supp}{supp}
\let\inf\relax
\DeclareMathOperator*\inf{\vphantom{p}inf}
\renewcommand{\epsilon}{\varepsilon}
\renewcommand{\phi}{\varphi}
\newcommand{\R}{\mathbb{R}}
\newcommand{\D}{\mathcal{D}}
\newcommand{\dVolume}{\,{\rm d} x}
\newcommand{\dSurf}{\,{\rm d}S}
\newcommand{\LagrangeFunction}{\mathscr{L}}
\newcommand{\abs}[1]{\left\vert #1 \right\vert}
\newcommand{\weakstar}{weak-$\ast$}
\newcommand{\optimalValue}{\mathcal{F}^*}
\newcommand{\occBound}{\mathcal{F}_{\rm omr}}
\newcommand{\pdrBound}{\mathcal{L}_{\rm pdr}}
\newcommand{\wkstarto}{\overset{\ast}{\rightharpoonup}}
\newcommand{\measSpace}{\mathbb{M}^p}
\newcommand{\phiSpace}[1]{\Phi^{#1}}
\newcommand{\GYMspace}[1]{{\rm GY}^{#1}}
\newcommand{\gym}{\lambda_x}
\newcommand{\dgym}{{\rm d}\gym}
\newcommand{\omSpace}[1]{\mathbb{O}^{#1}}
\newcommand{\subalign}[1]{%
	\vcenter{%
		\Let@ \restore@math@cr \default@tag
		\baselineskip\fontdimen10 \scriptfont\tw@
		\advance\baselineskip\fontdimen12 \scriptfont\tw@
		\lineskip\thr@@\fontdimen8 \scriptfont\thr@@
		\lineskiplimit\lineskip
		\ialign{\hfil$\m@th\scriptstyle##$&$\m@th\scriptstyle{}##$\crcr
			#1\crcr
		}%
	}
}
\newcommand*\fsize{\dimexpr\f@size pt\relax}
\begin{document}

\maketitle

\begin{abstract}
	We analyze two recently proposed methods to establish \emph{a priori} lower bounds on the minimum of general integral variational problems. The methods, which involve either `occupation measures' or a `pointwise dual relaxation' procedure, are shown to produce the same lower bound under a coercivity hypothesis ensuring their strong duality.
    We then show by a minimax argument that the methods actually  evaluate the  minimum for classes of one-dimensional, scalar-valued, or convex multidimensional problems. For generic problems, however, these methods should fail to capture the minimum and produce non-sharp lower bounds.
    We demonstrate this using two examples, the first of which is one-dimensional and scalar-valued with a non-convex constraint, and the second of which is multidimensional and non-convex in a different way. The latter example emphasizes the existence in multiple dimensions of nonlinear constraints on gradient fields that are ignored by occupation measures, but are built into the  finer theory of gradient Young measures. 
\end{abstract}

\begin{keywords}
	Occupation measures, gradient Young measures, relaxation, convex duality, calculus of variations
\end{keywords}

\begin{AMS}
    49J45, %Methods involving semicontinuity and convergence; relaxation
	49N15, %Duality theory (optimization)
    49M20  %Numerical methods of relaxation type
\end{AMS}

%%%%%%%%%%%%%%%%%%%%%%%%%%%%%%%%%%%%%%%%%%%%%%%%%%%%%%%%%%%%%%%%%%%%%%%%%%%%%%%%%%%%%%%%%%%%%%%%%%%%
%% MAIN TEXT
%%%%%%%%%%%%%%%%%%%%%%%%%%%%%%%%%%%%%%%%%%%%%%%%%%%%%%%%%%%%%%%%%%%%%%%%%%%%%%%%
\section{Introduction}\label{s:intro}
Integral variational problems arise throughout mathematics and the applied sciences. A typical problem takes the form
\begin{equation}\label{e:basic-functional}
\inf_{\substack{u \in W^{1,p}(\Omega; \R^m) \\ +\text{constraints}}} \int_{\Omega} f(x,u,\nabla u) \, \dVolume,
\end{equation}
where the minimization is over the space of Sobolev maps $W^{1,p}(\Omega; \R^m)$ consisting of all $u : \Omega\subset\R^n \to \R^m$ with $p$-integrable dervatives $(\nabla u)_{ij} = \smash{\frac{\partial u_i}{\partial x_j}}$ for $i=1,\dots,m$, $j=1,\dots,n$.
It is generally difficult to find the minimum value $\optimalValue$ of \cref{e:basic-functional}. Traditional optimization algorithms based on gradient or Newton methods compute stationary points, which need not be global minimizers when the problem is non-convex. Put another way, knowledge of a stationary point provides at best an upper bound $U\geq\optimalValue$ in the general case. To complete the picture, one desires methods for proving \emph{a priori} lower bounds $L\leq\optimalValue$, whose values can  be compared with upper bounds to estimate the minimum from above and below. Finding a lower bound is non-trivial, since it requires addressing general properties of admissible maps. Standard approaches include the `translation method' from the theory of composite materials \cite{Firoozye1991,Milton2002composites}, or the `calibration method', which has been used in geometry~\cite{Buttazzo1998,DePhilippis2009}. 

This paper discusses a recent method for finding lower bounds on integral variational problems based on the use of `occupation measures'~\cite{Korda2018,Korda2022} (see also \cite{Awi2014}). 
The basic idea is to replace the original problem \cref{e:basic-functional} with a convex minimization problem posed over measures obtained via a push-forward operation. The resulting `occupation measure relaxation' bound $\occBound$ is never larger than the original minimum $\optimalValue$.
A natural question is whether $\occBound=\optimalValue$; the answer, as we shall show, depends on the nature of the problem. 
As we were preparing this article for submission, we became aware of related progress in \cite{Korda2022,hkkz2023arxiv}. Our results are the same in some cases, but differ in general. Specific comparisons to help the reader sort through what is known about the sharpness/non-sharpness of the occupation measures method appear later in this introduction and throughout the article.

The idea of relaxing a variational problem posed over maps into one involving other quantities is much older than the notion of occupation measures, and dates back at least to Young's generalized curves~\cite{Young1980}. In the modern calculus of variations, the notions of Young measures and gradient Young measures~\cite{Pedregal1997,Pedregal1999,Rindler2018} are nowadays used to find measure-theoretic `relaxed problems' that are well-posed and whose optimal values are guaranteed to capture the original minimum, $\optimalValue$.
Related to this is the notion of quasiconvex functions (in the sense of Morrey~\cite{Morrey1952}), which are the natural class of integrands for which minimizers are gauranteed to exist~\cite{Dacorogna2008,Morrey1952,Rindler2018}. It is well known that to find a sharp lower bound on a general integral functional one can replace the integrand with its quasiconvex hull. Measure-theoretically, this is the same as extending the minimization from Sobolev maps to couples of maps and their compatible gradient Young measures.
Although these measures have been characterized through Jensen-type inequalities involving quasiconvex (in addition to convex) integrands~\cite{Kinderlehrer1994}, the lack of a `local' test for quasiconvexity~\cite{Kristensen1999} complicates applications of this result. Notable exceptions include one-dimensional ($n=1$) and scalar-valued ($m=1$) problems, for which quasiconvexity reduces to convexity. In these cases, gradient Young measures have been explored numerically as a way to approximate global minima~\cite{Aranda2011,Bartels2006,Egozcue2003,Jaramillo2021,Meziat2005,Meziat2008,Meziat2010,Meziat2006}.

Occupation measures are instead defined via linear constraints regardless of the dimensions $n$ and $m$ of the problem.  This has been motivated~\cite{Korda2018} by the possibility of using semidefinite programming to compute the optimal lower bound $\occBound\leq \optimalValue$ provable with occupation measures, for polynomial integrands and constraints. However, this reliance on linear constraints should mean that $\occBound\neq \optimalValue$ for generic non-convex problems, and especially in the vectorial case where $n,m\geq 2$. Our key point is that, without extra hypotheses, the set of occupation measures for a given problem (`relaxed occupation measures' in the language of~\cite{Korda2022}) should be strictly larger than the set of gradient Young measures. Since the latter give sharp lower bounds, the former should not, except in particular cases. We illustrate this with a well-known example from the theory of relaxed variational problems, involving a non-convex `double-well' potential $f(\nabla u)$ built with pairs of incompatible matrices (see  \cref{s:failure}). A separate example of non-sharpness appeared in~\cite{Korda2022}, which, in contrast to ours, uses an integrand $f(x,u,\nabla u)$ that is convex in $\nabla u$ and non-convex in $u$.

Due to the examples, structural hypotheses are needed for the occupation measure relaxation of a variational problem to be sharp. Given known results on the sharpness of gradient Young measure relaxations~\cite{Pedregal1997,Pedregal1999,Rindler2018}, we expect  occupation measures to produce sharp lower bounds ($\occBound=\optimalValue$) if the integrand $f(x,u,\nabla u)$ is convex in $\nabla u$ and if $n$ or $m$ is equal to one. Surprisingly, this remains an open problem.
Here, using the Legendre transform, we prove that $\occBound=\optimalValue$ for integrands  $f(x,u,\nabla u) = f_0(x,\nabla u) + f_1(x,u)$ where $f_0$ and $f_1$ are convex in $\nabla u$ and $u$, and $n$ and $m$ are arbitrary. After this paper was submitted, such sharpness was proved for $f(x,u,\nabla u)$ that are jointly convex in $u$ and $\nabla u$, and for arbitrary $n,m$ \cite{hkkz2023arxiv}. (Convexity is not necessary for sharpness, however, as occupation measures produce sharp lower bounds for some non-convex problems~\cite{Chernyavskiy2020}.)

Finally, we address a general drawback of the occupation measures approach: it requires the solution of an infinite-dimensional linear program to produce even a single bound (sharp or not). The `pointwise dual' approach from~\cite{Chernyavskiy2020}, instead, leads to a linear program over continuous functions where admissible functions give lower bounds. In \cref{s:duality}, we show that these approaches are dual, i.e., they are the min-max and max-min versions of a saddle-point problem; in particular, the occupation measures bound $\occBound$ is greater than or equal to the best pointwise dual lower bound $\pdrBound$. Under a suitable coercivity hypothesis for the original minimization problem guaranteeing its optimal value is finite, we prove the strong duality $\pdrBound = \occBound$.

The rest of this paper is organized as follows. \Cref{s:setup} states the general class of integral minimization problems we study. \Cref{s:om-relaxation} reviews the occupation measures approach to proving lower bounds, following~\cite{Korda2018}. \Cref{s:duality} derives the alternative pointwise dual relaxation from~\cite{Chernyavskiy2020} via a minimax argument, and proves the weak and strong duality results $\occBound\geq\pdrBound$ and $\occBound=\pdrBound$ (the former always holds, the latter holds under a coercivity hypothesis). \Cref{s:convex-integrands} proves the sharpness result $\optimalValue=\occBound=\pdrBound$ for various convex or convexifiable problems by exhibiting an optimal pointwise dual bound.
In general, when sharpness holds, one can find optimality conditions differing from the usual Euler--Lagrange ones; we explain this briefly in \cref{s:optimizers}.
\Cref{s:failure} presents our two counterexamples in which $\optimalValue>\occBound$, and includes a discussion of the relationship between occupation measures and gradient Young measures. \Cref{s:conclusions} concludes with some final remarks.

%%%%%%%%%%%%%%%%%%%%%%%%%%%%%%%%%%%%%%
\section{Setup}\label{s:setup}
This paper studies general integral variational problems of the form
\begin{equation}
\label{e:inf-problem-general}
\optimalValue := \inf_{
	\substack{u \in W^{1,p}(\Omega; \R^m)\\
		\text{\cref{e:constraints-general-1}--\cref{e:constraints-general-3}}}
} \;
\int_{\Omega} f(x,u,\nabla u) \, \dVolume
+ \int_{\partial\Omega} g(x,u) \dSurf.
\end{equation}
Here, $\Omega \subset \R^n$ is an open bounded Lipschitz domain with boundary $\partial\Omega$, and $\dVolume$ and $\dSurf$ are the usual volume and $(n-1)$-dimensional surface measures. The minimization is over weakly differentiable functions $u:\Omega \to \R^m$ in the Sobolev space $W^{1,p}(\Omega;\R^m)$ with some fixed $p \in (1,\infty)$, subject to the constraints
\begin{subequations}
    \label{e:constraints}
	\begin{gather}
	\label{e:constraints-general-1}
	\int_{\Omega} a(x,u,\nabla u) \dVolume +
	\int_{\partial\Omega} b(x,u) \dSurf = 0,\\
	\label{e:constraints-general-2}
	c(x,u,\nabla u) = 0 \quad\text{ a.e. on } \Omega,\\
	\label{e:constraints-general-3}
	d(x,u) = 0 \quad\text{ a.e. on }\partial \Omega.
	\end{gather}
\end{subequations}
The functions $f,a,c:\Omega \times \mathbb{R}^m \times \mathbb{R}^{m \times n} \to \mathbb{R}$ and $g,b,d:\partial\Omega \times \mathbb{R}^m \to \mathbb{R}$ are assumed to be continuous, but not necessarily bounded.
Moreover, $f$, $g$, $a$ and $b$ are assumed to grow no faster than a degree-$p$ polynomial in the second and third arguments, e.g.
\begin{equation}\label{e:p-growth}
	\abs{f(x,u,F)} \lesssim 1 + |u|^p + |F|^p \quad \text{a.e. } x \in\Omega.
\end{equation}
This last restriction ensures that all integrals above are well-defined.

%%%%%%%%%%%%%%%%%%%%%%%%%%%%%%%%%%%%%%%%%%%%%%%%%%%%%%%%%%%%%%%%%%%%%%%%%%%%%%%%
\section{Occupation and boundary measures}\label{s:om-relaxation}

As explained in~\cite{Korda2018}, lower bounds on the optimal value of problem~\cref{e:inf-problem-general} can be derived by first posing the minimization over so-called \textit{occupation} and \textit{boundary measures} generated by admissible functions $u$, and then by convexifying this measure-theoretic problem into a linear program over a larger set of measures.
This section reviews the derivation of this linear program to set the stage for the remainder of the article.

\subsection{Measures generated by Sobolev functions}\label{ss:om-basics}
The \emph{occupation measure} $\mu$ generated by $u \in W^{1,p}(\Omega; \R^m)$ is the pushforward of the Lebesgue measure on $\Omega$ by the map $x \mapsto (x,u(x),\nabla u(x))$ from $\Omega$ into $\Omega \times \R^m \times \R^{m\times n}$. Likewise, the \emph{boundary measure} $\nu$ generated by $u$ is the pushforward of the surface measure on $\partial\Omega$ by the map $x \mapsto (x,u(x))$ from $\partial\Omega$ into $\partial\Omega \times \R^m$. Precisely,
\begin{subequations}
	\begin{align}
		\label{e:mu-integral}
		\int_{\Omega} h(x,u(x),\nabla u(x)) \,\dVolume
		&= \int_{\Omega \times \R^m \times \R^{m \times n}} h(x,y,z) \,{\rm d}\mu(x,y,z)
		\\[1ex]
		\int_{\partial\Omega} \ell(x,u(x)) \,\dSurf
		&= \int_{\partial\Omega \times \R^m} \ell(x,y) \,{\rm d}\nu(x,y)
		\label{e:nu-integral}
	\end{align}
\end{subequations}
for all continuous functions $h \in C(\overline{\Omega} \times \R^m \times \R^{m \times n})$ and $\ell \in C(\partial\Omega \times \R^m)$ satisfying $p$-growth bounds analogous to \cref{e:p-growth}. To lighten the notation we denote integration against measures using angled brackets:
\begin{subequations}
	\begin{align}
	\langle h, \mu \rangle &:= \int_{\Omega \times \R^m \times \R^{m \times n}} h(x,y,z) \,{\rm d}\mu(x,y,z)\\
	\langle \ell, \nu \rangle &:= \int_{\partial\Omega \times \R^m} \ell(x,y) \,{\rm d}\nu(x,y)
	\end{align}
\end{subequations}

Occupation and boundary measures generated by Sobolev functions subject to the constraints~\cref{e:constraints} of the variational problem~\cref{e:inf-problem-general} satisfy a number of conditions, such as moment bounds. We summarize these conditions now. Define the sets
\begin{subequations}
\begin{align}
    \label{e:Gamma-def}
    \Gamma &:= \{(x,y,z) \in \overline{\Omega} \times \R^m \times \R^{m\times n}: \,c(x,y,z) = 0 \},\\
    \Lambda &:=\{(x,y) \in \partial\Omega \times \R^m: \,d(x,y) = 0 \}.
    \label{e:Lambda-def}
\end{align}
\end{subequations}
We use standard multi-index notation, such as $x^\alpha = x_1^{\alpha_1} \cdots x_n^{\alpha_n}$ for a multivariate monomial and $\abs{\alpha} = \alpha_1+ \cdots + \alpha_n$ for the order of the multi-index $\alpha$.
\begin{lemma}\label{th:simple-constraints}
Let $\mu$ and $\nu$ be the occupation and boundary measures generated by a function $u \in W^{1,p}(\Omega;\R^m)$ that satisfies the constraints~\cref{e:constraints}. Then, $\mu$ and $\nu$ have bounded moments of order $p$ or less, i.e.,
\begin{subequations}
	\label{e:p-moment-bounds}
	\begin{gather}
		\langle x^\alpha y^\beta z^\gamma,\, \mu \rangle
		< \infty \quad \forall (\alpha,\beta,\gamma) \in \mathbb{N}^n \times \mathbb{N}^m \times \mathbb{N}^{m\times n} \text{ s.t. } |\beta| + |\gamma| \leq p,
		\\
		\langle x^\alpha y^\beta,\, \nu \rangle
		< \infty \quad \forall (\alpha,\beta) \in \mathbb{N}^n \times \mathbb{N}^m \text{ s.t. } |\beta| \leq p.
	\end{gather}
\end{subequations}
Moreover,
\begin{subequations}
\begin{gather}
    \label{e:mu-nu-equalities}
    \langle a, \mu \rangle + \langle b, \nu \rangle = 0,\\
    \label{e:marginal-constraints-mu}
    \textstyle\langle h, \mu\rangle = \int_{\Omega} h(x)  \dVolume \quad \forall h \in C(\overline{\Omega}),\\
    \label{e:marginal-constraints-nu}
    \textstyle\langle \ell, \nu\rangle = \int_{\partial\Omega} \ell(x)  \dSurf \quad \forall \ell \in C(\partial\Omega),\\
	\label{e:support-constraints-mu}
    \supp(\mu) \subseteq \Gamma,\\
    \label{e:support-constraints-nu}
    \supp(\nu) \subseteq \Lambda.
\end{gather}
\end{subequations}
\end{lemma}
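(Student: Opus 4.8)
The plan is to verify each assertion directly from the definitions of the pushforward measures $\mu$ and $\nu$; the work is bookkeeping with $p$-growth bounds, membership in $W^{1,p}$, and the trace operator on the Lipschitz domain $\Omega$. First I would record the structural facts: $\mu$ is the finite positive Borel measure on $\overline{\Omega}\times\R^m\times\R^{m\times n}$ given by $\mu(A)=|\{x\in\Omega:(x,u(x),\nabla u(x))\in A\}|$, so it has total mass $|\Omega|<\infty$; likewise $\nu$ is the finite positive measure on $\partial\Omega\times\R^m$ obtained by pushing the surface measure forward along $x\mapsto(x,u(x))$, where $u$ now denotes the trace of the Sobolev function, which lies in $L^p(\partial\Omega;\R^m)$ precisely because $\Omega$ is bounded and Lipschitz. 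The elementary change-of-variables formula for pushforwards then yields the identities \cref{e:mu-integral} and \cref{e:nu-integral} for every nonnegative Borel integrand, and hence for every integrand obeying a $p$-growth bound of the type \cref{e:p-growth}, as soon as one knows the right-hand sides are finite; this finiteness is exactly the statement that $1+|u|^p+|\nabla u|^p\in L^1(\Omega)$ and $1+|u|^p\in L^1(\partial\Omega)$, which holds since $u\in W^{1,p}(\Omega;\R^m)$ and $\Omega$ is bounded.

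With this preparation, the remaining items follow quickly. For the moment bounds \cref{e:p-moment-bounds} I would note that on $\overline{\Omega}\times\R^m\times\R^{m\times n}$ one has $|x^\alpha y^\beta z^\gamma|\lesssim|y|^{|\beta|}|z|^{|\gamma|}$ because $\overline{\Omega}$ is compact, and then $|y|^{|\beta|}|z|^{|\gamma|}\leq|y|^{|\beta|+|\gamma|}+|z|^{|\beta|+|\gamma|}\lesssim 1+|y|^p+|z|^p$ by Young's inequality whenever $|\beta|+|\gamma|\leq p$; thus each such monomial has a $p$-growth bound and its integral against $\mu$ is finite by the first paragraph, and similarly for $\nu$. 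For \cref{e:mu-nu-equalities}, the assumed $p$-growth of $a$ and $b$ lets me rewrite $\langle a,\mu\rangle+\langle b,\nu\rangle$ as $\int_\Omega a(x,u,\nabla u)\dVolume+\int_{\partial\Omega}b(x,u)\dSurf$, which vanishes by \cref{e:constraints-general-1}. For the marginal identities \cref{e:marginal-constraints-mu}--\cref{e:marginal-constraints-nu}, I would apply \cref{e:mu-integral}--\cref{e:nu-integral} to integrands depending only on $x$, namely $(x,y,z)\mapsto h(x)$ with $h\in C(\overline{\Omega})$ and $(x,y)\mapsto\ell(x)$ with $\ell\in C(\partial\Omega)$. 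Finally, for the support constraints \cref{e:support-constraints-mu}--\cref{e:support-constraints-nu}, continuity of $c$ and $d$ makes $\Gamma$ and $\Lambda$ closed, while \cref{e:constraints-general-2}--\cref{e:constraints-general-3} say that $(x,u(x),\nabla u(x))\in\Gamma$ for a.e.\ $x\in\Omega$ and $(x,u(x))\in\Lambda$ for a.e.\ $x\in\partial\Omega$; hence $\mu$ and $\nu$ assign zero mass to the open complements of $\Gamma$ and $\Lambda$, and since the support of a measure is the smallest closed set of full measure, $\supp(\mu)\subseteq\Gamma$ and $\supp(\nu)\subseteq\Lambda$.

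I do not expect any serious obstacle here; the lemma is essentially a matter of unwinding definitions. The one point I would take care to justify rather than assert is the extension of the pushforward change-of-variables formula \cref{e:mu-integral}--\cref{e:nu-integral} from bounded continuous integrands to unbounded ones with merely $p$-growth: this requires checking that $\mu$ and $\nu$ really are Radon measures on their non-compact target spaces and passing from indicator functions to $p$-growth integrands through a monotone-class and dominated-convergence argument, using the $L^p$ control of $u$, $\nabla u$, and the trace of $u$. It is also worth stating explicitly that the trace theorem is what gives meaning to $u$ on $\partial\Omega$ and hence to $\nu$, and that this is where the Lipschitz regularity of $\Omega$ enters.
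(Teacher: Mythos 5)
Your proof is correct and follows essentially the same route as the paper's: unwind the pushforward identities for the moment bounds, the integral constraint, and the $x$-marginals, and use that $(x,u(x),\nabla u(x))\in\Gamma$ a.e.\ (resp.\ $(x,u(x))\in\Lambda$) for the support inclusions, which the paper phrases as a contradiction with a positive-mass set where $c\neq 0$ while you argue directly via zero mass on the open complement of the closed set $\Gamma$. Your added care about the trace theorem and extending the change-of-variables formula to $p$-growth integrands is sound and only makes explicit what the paper leaves implicit.
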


\begin{proof}
	The boundedness of all moments of order $p$ or less follows from the definition of $\mu$ and $\nu$ as pushforward measures of functions in $W^{1,p}$.
	Conditions~\cref{e:marginal-constraints-mu,e:marginal-constraints-nu} follow from \cref{e:mu-integral,e:nu-integral} when $h$ and $\ell$ depend only on $x$.
	The same two identities applied to the integral constraint~\cref{e:constraints-general-1} yield~\cref{e:mu-nu-equalities}. To see that $\supp(\mu) \subseteq \Gamma$, assume by contradiction that there exists a set
	$K \subset \Omega \times \R^m \times \R^{m\times n}$
	such that $\mu(K)>0$ and $c(x,y,z)\neq 0$ on $K$. Then, writing $\chi_{K}$ for the indicator function of $K$, we obtain the contradiction
	\begin{equation}
	    0 <
	    \langle \chi_K, \, \mu \rangle
	    = \int_{\Omega} \chi_K(x,u(x),\nabla u(x)) \dVolume = 0.
	\end{equation}
	The last equality holds because $u$ satisfies~\cref{e:constraints-general-2} and, therefore, $\chi_K(x,u(x),\nabla u(x))=0$ a.e. on $\Omega$. A similar argument proves that $\supp(\nu) \subseteq \Lambda$.
\end{proof}

A second group of constraints is derived from the divergence theorem and, loosely speaking, encodes the fact that occupation and boundary measures are constructed using $u$ as well as its gradient $\nabla u$. Define the vector space
\begin{align}\label{e:Phi-space}
	\phiSpace{p} := \bigg\{
	\phi \in C^1(\overline{\Omega} \times \R^m; \R^n):\;
	&\sum_{i=1}^n\sum_{j=1}^m \abs{\tfrac{\partial}{\partial y_j}\phi_i(x,y)} \lesssim 1+\abs{y}^{p-1},\\[-1ex]
	\nonumber
	&\sum_{i=1}^n \abs{\phi_i(x,y)} + \sum_{i=1}^n \abs{\tfrac{\partial}{\partial x_i}\phi_i(x,y)} \lesssim 1+\abs{y}^p
	\bigg\},
\end{align}
where $f \lesssim g$ means $f \leq Cg$ for some constant $C>0$. This space is clearly not empty; for example, it contains all degree-$p$ polynomials of $y$ whose coefficients depend on $x$ and are uniformly bounded on $\overline{\Omega}$.
For any $\phi \in \phiSpace{p}$, define the \textit{total divergence} $\D\phi: \Omega \times \R^m \times  \R^{m\times n} \to \R$ as
\begin{equation}\label{e:total-divergence}
\D\phi(x,y,z) := \sum_{i=1}^{n} \frac{\partial}{\partial x_i}\phi_i(x,y) + \sum_{i=1}^{n} \sum_{j=1}^{m}\frac{\partial}{\partial y_j}  \phi_i(x,y) \, z_{ji}.
\end{equation}
This definition is such that $\D\phi(x,u(x),\nabla u(x))= \nabla \cdot \phi(x,u(x))$ when the right-hand side is calculated using the chain rule. The divergence theorem then gives
\begin{equation}\label{e:div-theorem}
\int_{\Omega} \mathcal{D}\phi(x,u,\nabla u, s) \,\dVolume = \int_{\partial\Omega} \phi(x,u, s) \cdot \hat{n}(x) \,\dSurf,
\end{equation}
where $\hat{n}(x)$ is the outward unit vector normal to the boundary of $\Omega$ at $x$. Applying \cref{e:mu-integral,e:nu-integral} to this identity we find the second group of constraints on occupation and boundary measures generated by Sobolev functions.
\begin{lemma}\label{th:divergence-constraints}
    Let $\mu$ and $\nu$ be occupation and boundary measures generated by $u \in W^{1,p}(\Omega;\R^m)$, and let $\phiSpace{p}$ be the vector space of functions in~\cref{e:Phi-space}. Then,
    \begin{equation}\label{e:divergence-constraints-measures}
        \langle \D\phi, \mu \rangle - \langle \phi \cdot \hat{n}, \nu \rangle = 0 \qquad \forall \phi \in \phiSpace{p}.
    \end{equation}
\end{lemma}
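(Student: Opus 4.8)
The plan is to deduce \cref{e:divergence-constraints-measures} from the classical Gauss--Green identity \cref{e:div-theorem}, which I will establish for an arbitrary $u \in W^{1,p}(\Omega;\R^m)$ and $\phi \in \phiSpace{p}$, by rewriting each of its two sides with the pushforward formulas \cref{e:mu-integral} and \cref{e:nu-integral}: the left-hand side of \cref{e:div-theorem} becomes $\langle \D\phi, \mu\rangle$ and the right-hand side becomes $\langle \phi\cdot\hat n, \nu\rangle$, which is exactly the claim. So the real content is twofold: checking that $\D\phi$ and $\phi\cdot\hat n$ are admissible test functions in \cref{e:mu-integral} and \cref{e:nu-integral}, and verifying \cref{e:div-theorem} for Sobolev rather than smooth maps.

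For the admissibility I would set $\psi := \phi(\cdot,u)$ and differentiate formally. The two families of growth conditions in \cref{e:Phi-space}, together with $u \in L^p(\Omega;\R^m)$ and H\"older's inequality with exponents $p$ and $p/(p-1)$ applied to the cross term $\frac{\partial}{\partial y_j}\phi_i(x,u)\,(\nabla u)_{ji}$ appearing in \cref{e:total-divergence}, show that $\psi \in L^p(\Omega;\R^n)$ and $\D\phi(\cdot,u,\nabla u) \in L^1(\Omega)$; and since $p>1$ and $\Omega$ is a bounded Lipschitz domain, the Sobolev trace $u|_{\partial\Omega}$ lies in $L^p(\partial\Omega;\R^m)$, so the bound $\abs{\phi(x,y)} \lesssim 1+\abs{y}^p$ gives $\phi(\cdot,u|_{\partial\Omega})\cdot\hat n \in L^1(\partial\Omega)$. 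These are precisely the $p$-growth bounds that make \cref{e:mu-integral} and \cref{e:nu-integral} applicable with $h = \D\phi$ and $\ell = \phi\cdot\hat n$.

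To prove \cref{e:div-theorem} I would argue by smooth approximation. Pick $u_k \to u$ in $W^{1,p}(\Omega;\R^m)$ with each $u_k$ smooth; continuity of the trace operator gives $u_k|_{\partial\Omega} \to u|_{\partial\Omega}$ in $L^p(\partial\Omega)$. For each $k$ the classical chain rule gives $\nabla\cdot\phi(\cdot,u_k) = \D\phi(\cdot,u_k,\nabla u_k)$ and the classical divergence theorem on $\Omega$ gives $\int_\Omega \D\phi(x,u_k,\nabla u_k)\,\dVolume = \int_{\partial\Omega}\phi(x,u_k)\cdot\hat n\,\dSurf$; passing to the limit $k\to\infty$ in both integrals then yields \cref{e:div-theorem}. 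Equivalently, one can invoke a chain-rule theorem for $C^1$ integrands of $p$-growth to see directly that $\phi(\cdot,u) \in W^{1,1}(\Omega;\R^n)$ with distributional divergence $\D\phi(\cdot,u,\nabla u)$ and boundary trace $\phi(\cdot,u|_{\partial\Omega})$, and then apply the Gauss--Green theorem for $W^{1,1}$ vector fields on Lipschitz domains.

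The step I expect to be the main obstacle is this limit passage, since $\phi$ and its derivatives are only assumed to grow polynomially in $y$ and are not globally Lipschitz, so the elementary chain rule for Sobolev compositions does not apply and the convergences must come from a generalized dominated convergence argument. After extracting a subsequence one may assume $u_k \to u$ and $u_k|_{\partial\Omega}\to u|_{\partial\Omega}$ pointwise a.e.\ and dominated by functions $G \in L^p(\Omega)$ and $G_\partial \in L^p(\partial\Omega)$, and $\nabla u_k \to \nabla u$ both in $L^p(\Omega)$ and a.e. Continuity of $\phi$ and $\nabla\phi$ then gives a.e.\ convergence of every term in \cref{e:total-divergence} and of $\phi(\cdot,u_k)\cdot\hat n$; the $x$-derivative and boundary terms are dominated in $L^1$ by $C(1+G^p)$ and $C(1+G_\partial^p)$, while the cross term is controlled by noting that $\frac{\partial}{\partial y_j}\phi_i(\cdot,u_k)$ is bounded by $C(1+G^{p-1}) \in L^{p/(p-1)}(\Omega)$ and converges a.e., hence strongly in $L^{p/(p-1)}(\Omega)$, so that its product with $(\nabla u_k)_{ji} \to (\nabla u)_{ji}$ in $L^p(\Omega)$ converges in $L^1(\Omega)$. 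Because \cref{e:divergence-constraints-measures} is independent of the approximating sequence, working along a subsequence is harmless, and combining this with the admissibility check above completes the argument.
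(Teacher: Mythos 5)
Your proposal is correct and follows essentially the same route as the paper, which obtains \cref{e:divergence-constraints-measures} by applying the pushforward identities \cref{e:mu-integral,e:nu-integral} to the divergence-theorem identity \cref{e:div-theorem}; the paper simply asserts \cref{e:div-theorem} via the chain rule without further justification. Your additional work—checking that the growth conditions in \cref{e:Phi-space} make $\D\phi$ and $\phi\cdot\hat n$ admissible test functions, and justifying \cref{e:div-theorem} for $W^{1,p}$ maps by smooth approximation with a dominated-convergence and H\"older argument for the cross term—correctly supplies the details the paper leaves implicit.
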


%%%%%%%%%%%%%%%%%%%%%%%%%%%%%%%%%%%%%%%%%%%%
\subsection{Lower bounds via occupation measures}
\label{ss:om-relaxed-problem}

We now apply the definitions above to bound the minimum of problem~\cref{e:inf-problem-general} from below. First, observe that
\begin{equation}\label{e:occ-measure-form}
\optimalValue = \inf_{(\mu,\, \nu)}  \left\{ \langle f, \mu \rangle + \langle g, \nu \rangle \right\},
\end{equation}
where the minimization is over all pairs $(\mu,\nu)$ of occupation and boundary measures generated by functions $u$ admissible for~\cref{e:inf-problem-general}.
To bound $\optimalValue$ from below, we simply extend the minimization to all pairs $(\mu,\nu)$ in the convex set defined by the conditions in \cref{th:simple-constraints,th:divergence-constraints}, which we call occupation and boundary measures. (Note our terminology departs slightly from \cite{Korda2018,Korda2022}, where such measures are called \emph{relaxed} occupation and boundary measures.)

Precisely, let $\mathcal{M}^p(\Gamma)$ and $\mathcal{M}^p(\Lambda)$ be the convex cones of Radon measures that are supported on the sets $\Gamma$ and $\Lambda$ defined in~\cref{e:Gamma-def,e:Lambda-def}, and whose moments of order $p$ or less are bounded as in \cref{e:p-moment-bounds}. Let
\begin{equation}\label{e:M-space}
    \measSpace :=  \mathcal{M}^p(\Gamma) \times \mathcal{M}^p(\Lambda).
\end{equation}
The set of \emph{occupation and boundary measures} is defined here as
\begin{equation}\label{e:A-set-definition}
\omSpace{p} := \left\{ (\mu, \nu) \in \measSpace: \;
\text{\cref{e:mu-nu-equalities,e:marginal-constraints-mu,e:marginal-constraints-nu} and \cref{e:divergence-constraints-measures}}
\right\}.
\end{equation}
It is clearly convex, as it is defined using linear and one-sided constraints, and it contains all occupation and boundary measures generated by Sobolev functions $u$ admissible in~\cref{e:inf-problem-general}.

Extending the minimization in~\cref{e:occ-measure-form} to $\omSpace{p}$ yields an infinite-dimensional linear program whose minimum bounds $\optimalValue$ from below:
\begin{equation}
\label{e:occ-measures-minimization}
\optimalValue
\geq \inf_{(\mu,\nu) \in \omSpace{p}} \; \left\{\langle f, \mu \rangle + \langle g, \nu \rangle\right\}
=: \occBound.
\end{equation}
The rest of this article analyzes the \emph{occupation measure relaxation} \cref{e:occ-measures-minimization} with an eye towards understanding whether it is or is not sharp, i.e., if $\occBound=\optimalValue$ or not.

%%%%%%%%%%%%%%%%%%%%%%%%%%%%%%%%%%%%%%%%%%%%%%%%%%%%%%%%%%%%%%%%%%%%%%%%%%%%%%%%
\section{A dual scheme for proving lower bounds}\label{s:duality}

Calculating the occupation measure relaxation bound $\occBound$ amounts to solving an infinite-dimensional linear program posed over measures. In principle, this admits a dual problem posed over continuous functions. The advantage of this dual problem is that any admissible choice of functions proves a lower bound on $\occBound$. \Cref{ss:weak-duality} derives the dual problem via a minimax argument, leading to a `weak' duality result. \Cref{ss:strong-duality} improves this to a `strong' duality under suitable conditions. When strong duality holds, $\occBound$ is computed by the dual maximization.

\subsection{Weak duality}\label{ss:weak-duality}
Consider the vector space of continuous functions
\begin{equation}
    \mathbb{V} := \phiSpace{p}
	\times \R
	\times C(\overline{\Omega})
	\times C(\partial\Omega),
	\label{e:V-space}
\end{equation}
where $\phiSpace{p}$ is defined in \cref{e:Phi-space}. Consider also the convex subset
\begin{align}\label{e:B-set}
	\mathbb{B} :=
	\{(\phi,\eta,h,\ell) \in \mathbb{V}:\;
	F^{\phi, \eta, h}(x,y,z) &\geq 0 \; \text{on } \Gamma,
	\;
	G^{\phi, \eta, \ell}(x,y) &\geq 0 \; \text{on }  \Lambda
	\},
\end{align}
where
\begin{subequations}
    \begin{align}
    \label{e:F-def}
    F^{\phi, \eta, h}(x,y,z) &:= f(x,y,z) + \mathcal{D}\phi(x,y,z) - \eta \cdot a(x,y,z) - h(x),
    \\
    \label{e:G-def}
    G^{\phi, \eta, \ell}(x,y) &:= g(x,y) - \phi(x,y) \cdot \hat{n}(x) - \eta \cdot b(x,y) - \ell(x).
    \end{align}
\end{subequations}
A straightforward minimax argument yields the following weak duality result. (See \cite{Chernyavskiy2020} for a derivation of the dual problem that avoids the use of measures.)

\begin{theorem}\label{th:weak-duality}
    The optimal value $\occBound$ of the linear program~\cref{e:occ-measures-minimization} satisfies
    \begin{equation}\label{e:pdr-bound}
        \occBound \geq
        \sup_{(\phi,\eta,h,\ell)\in\mathbb{B}}
        \left\{ \int_\Omega h(x) \dVolume + \int_{\partial\Omega} \ell(x) \dSurf  \right\}
        =: \pdrBound.
    \end{equation}
\end{theorem}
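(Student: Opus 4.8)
The plan is to establish the inequality $\occBound \geq \pdrBound$ by a direct weak-duality (Lagrangian) argument: introduce multipliers for each linear constraint defining $\omSpace{p}$, form the Lagrangian, and show that for any feasible dual quadruple $(\phi,\eta,h,\ell)\in\mathbb{B}$ the dual objective is a lower bound for the primal objective $\langle f,\mu\rangle + \langle g,\nu\rangle$ at every primal-feasible pair $(\mu,\nu)\in\omSpace{p}$. Fix $(\mu,\nu)\in\omSpace{p}$ and $(\phi,\eta,h,\ell)\in\mathbb{B}$. The idea is to add to $\langle f,\mu\rangle + \langle g,\nu\rangle$ the quantities that are zero by feasibility: namely $\eta$ times the left side of \cref{e:mu-nu-equalities}, the divergence identity \cref{e:divergence-constraints-measures}, and the subtraction of $\langle h,\mu\rangle - \int_\Omega h\,\dVolume = 0$ and $\langle \ell,\nu\rangle - \int_{\partial\Omega}\ell\,\dSurf = 0$ coming from \cref{e:marginal-constraints-mu,e:marginal-constraints-nu}.

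Concretely, I would write
\begin{align*}
\langle f,\mu\rangle + \langle g,\nu\rangle
&= \langle f,\mu\rangle + \langle g,\nu\rangle
- \eta\cdot\big(\langle a,\mu\rangle + \langle b,\nu\rangle\big)
+ \big(\langle \D\phi,\mu\rangle - \langle \phi\cdot\hat n,\nu\rangle\big)\\
&\quad - \Big(\langle h,\mu\rangle - \textstyle\int_\Omega h\,\dVolume\Big)
- \Big(\langle \ell,\nu\rangle - \int_{\partial\Omega}\ell\,\dSurf\Big)\\
&= \big\langle F^{\phi,\eta,h},\,\mu\big\rangle + \big\langle G^{\phi,\eta,\ell},\,\nu\big\rangle
+ \int_\Omega h\,\dVolume + \int_{\partial\Omega}\ell\,\dSurf,
\end{align*}
where the first equality holds because each parenthesized group vanishes by the constraints in \cref{e:A-set-definition}, and the second is just the definitions \cref{e:F-def,e:G-def} regrouped. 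Now use that $\mu \in \mathcal{M}^p(\Gamma)$ is a nonnegative measure supported on $\Gamma$, together with $F^{\phi,\eta,h}\geq 0$ on $\Gamma$ (feasibility in $\mathbb{B}$), to conclude $\langle F^{\phi,\eta,h},\mu\rangle \geq 0$; similarly $\langle G^{\phi,\eta,\ell},\nu\rangle \geq 0$. Hence $\langle f,\mu\rangle + \langle g,\nu\rangle \geq \int_\Omega h\,\dVolume + \int_{\partial\Omega}\ell\,\dSurf$. Taking the infimum over $(\mu,\nu)\in\omSpace{p}$ on the left and the supremum over $(\phi,\eta,h,\ell)\in\mathbb{B}$ on the right yields \cref{e:pdr-bound}.

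The one point requiring care — and the main technical obstacle — is integrability: I must check that each bracket $\langle F^{\phi,\eta,h},\mu\rangle$, $\langle G^{\phi,\eta,\ell},\nu\rangle$, and all the individual pairings appearing in the rearrangement are well-defined and finite, so that the additive manipulation is legitimate and no $\infty-\infty$ arises. This is exactly why the growth conditions are imposed: $f,a$ have $p$-growth and $g,b$ have $p$-growth in $y$ by the standing assumptions in \cref{s:setup}; the definition of $\phiSpace{p}$ in \cref{e:Phi-space} is tailored so that $\D\phi$ has $p$-growth in $(y,z)$ and $\phi\cdot\hat n$ has $p$-growth in $y$; and $h\in C(\overline\Omega)$, $\ell\in C(\partial\Omega)$ are bounded. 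Since $\mu$ and $\nu$ have bounded moments of order $\leq p$ by \cref{e:p-moment-bounds}, every pairing above is finite, and the rearrangement is valid. I would also note that the terms split as written because $F^{\phi,\eta,h}$ and $G^{\phi,\eta,\ell}$ are each separately $\mu$- and $\nu$-integrable, so linearity of the integral applies termwise. With these integrability remarks in place the argument is complete; strong duality $\occBound = \pdrBound$ is deferred to \cref{ss:strong-duality}.
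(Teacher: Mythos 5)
Your proof is correct and is essentially the paper's argument: the regrouped quantity $\langle F^{\phi,\eta,h},\mu\rangle + \langle G^{\phi,\eta,\ell},\nu\rangle + \int_\Omega h\,\dVolume + \int_{\partial\Omega}\ell\,\dSurf$ is exactly the Lagrangian $\LagrangeFunction$ of \cref{e:Lagrangian-def}, and your direct verification on feasible pairs is the weak-duality inequality \cref{e:weak-duality-1}--\cref{e:weak-duality-3} unrolled, with the nonnegativity of $\mu,\nu$ and of $F^{\phi,\eta,h}$, $G^{\phi,\eta,\ell}$ on $\Gamma$, $\Lambda$ playing the same role. The only presentational difference is that the paper also characterizes the inner sup and inf (via unboundedness off $\omSpace{p}$ and Dirac masses), which is not needed for the inequality alone, and your integrability remarks based on the $p$-growth conditions, the definition of $\phiSpace{p}$, and the moment bounds \cref{e:p-moment-bounds} correctly dispose of the one genuine subtlety in the rearrangement.
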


\begin{proof}[Proof of \cref{th:weak-duality}]
	Define a Lagrangian function $\LagrangeFunction: \measSpace \times \mathbb{V} \to \mathbb{R}$ as
	\begin{equation}\label{e:Lagrangian-def}
	\LagrangeFunction[(\mu,\nu), (\phi,\eta,h,\ell)] :=
	\langle F^{\phi, \eta, h}, \mu \rangle + \langle G^{\phi, \eta, \ell}, \nu \rangle
	+ \int_{\Omega} h(x)\,\dVolume
	+ \int_{\partial\Omega} \ell(x)\,\dSurf.
	\end{equation}
	Recalling the definition of $\occBound$ from~\cref{e:occ-measures-minimization}, it suffices to establish that
	\begin{subequations}
	\begin{align}
	\label{e:weak-duality-1}
	\inf_{(\mu,\nu) \in \omSpace{p}}
	\,
	\left\{\langle f, \mu \rangle + \langle g, \nu \rangle\right\}
	&=\;\;
	\inf_{(\mu,\nu) \in \measSpace}
	\;
	\sup_{(\phi,\eta,h,\ell) \in \mathbb{V}}
	\;
	\LagrangeFunction[(\mu,\nu), (\phi,\eta,h,\ell)]
	\\
	\label{e:weak-duality-2}
	&\geq
	\sup_{	(\phi,\eta,h,\ell) \in \mathbb{V} }
	\;
	\inf_{(\mu,\nu) \in \measSpace}
	\;
	\LagrangeFunction[(\mu,\nu), (\phi,\eta,h,\ell)]
	\phantom{\int_{\Omega}}
	\\
	\label{e:weak-duality-3}
	&=
	\sup_{ (\phi,\eta,h,\ell) \in \mathbb{B} }
	\,
	\left\{
	\int_{\Omega} h(x)\,\dVolume +
	\int_{\partial\Omega} \ell(x)\,\dSurf
	\right\}.
	\end{align}
	\end{subequations}

	The equality in~\cref{e:weak-duality-1} holds because the supremum on the right-hand side is equal to $\langle f, \mu \rangle + \langle g, \nu \rangle$ if $(\mu,\nu) \in \omSpace{p}$, whereas it is infinite otherwise.
	The former statement is an immediate consequence of the definition of $\LagrangeFunction$ and of the constraints defining $\omSpace{p}$.
	For the latter, observe that at least one condition from~\cref{e:mu-nu-equalities}, \cref{e:marginal-constraints-mu}, \cref{e:marginal-constraints-nu} and~\cref{e:divergence-constraints-measures} fails when $(\mu,\nu)\notin \omSpace{p}$.
	Assume for definiteness that the condition being violated is~\cref{e:divergence-constraints-measures}; similar arguments apply to the other cases.
	Then, there exists $\phi_0 \in \phiSpace{p}$ such that $\langle \D\phi_0,\mu \rangle - \langle \phi_0 \cdot \hat{n},\nu \rangle = C \neq 0$, and we may take $C > 0$ by replacing $\phi_0$ with $-\phi_0$ if necessary. Since $(k\phi_0,0,0,0)$ is in $\mathbb{V}$ for all $k \in \R$,
	\begin{align*}
	\sup_{ (\phi,\eta,h,\ell) \in \mathbb{V} }
	\;
	\LagrangeFunction[(\mu,\nu), (\phi,\eta,\xi,h,\ell)]
	&\geq \LagrangeFunction[(\mu,\nu), (k\,\phi_0,0,0,0)]\\[-2ex]
	&= \langle f + k\,\D\phi_0, \mu \rangle + \langle g - k\,\phi_0 \cdot n, \nu \rangle\\
	&= \langle f, \mu \rangle + \langle g, \nu \rangle + k C.
	\end{align*}
	Letting $k \to +\infty$ shows that the supremum on the left-hand side is unbounded, as claimed. Identity~\cref{e:weak-duality-1} is therefore proven.

	The inequality in~\cref{e:weak-duality-2} is a straightforward consequence of exchanging the order of minimization and maximization, leaving only the verification of~\cref{e:weak-duality-3}. For this, recall the definition of $\LagrangeFunction$ from~\cref{e:Lagrangian-def} and of the set $\mathbb{B}$ from~\cref{e:B-set}, and observe that
	\begin{equation*}
	\inf_{(\mu,\nu) \in \measSpace} \LagrangeFunction
	=
	\begin{cases}
	\int_{\Omega} h(x)\,\dVolume +
	\int_{\partial\Omega} \ell(x)\,\dSurf & \text{if } F^{\phi, \eta, h}\geq 0 \text{ on } \Gamma \text{ and } G^{\phi, \eta, \ell} \geq 0 \text{ on } \Lambda ,\\
	-\infty &\text{otherwise}.
	\end{cases}
	\end{equation*}
	Indeed, the infimum is attained when $\mu$ and $\nu$ are the zero measures if $F^{\phi, \eta, h}$ and $G^{\phi, \eta, \ell}$ are nonnegative on $\Gamma$ and $\Lambda$, respectively. Otherwise, let $(\mu_k, \nu_k)$ be a sequence of Dirac measures with mass $k$ supported at a point of $\Gamma$ where $F^{\phi, \eta, h}<0$ and at a point of $\Lambda$ where
	$G^{\phi, \eta, \ell}<0$, and let $k \to +\infty$. \Cref{th:weak-duality} is proved.
\end{proof}

\subsection{Strong duality}\label{ss:strong-duality}
The weak duality achieved above provides lower bounds on $\occBound$ through the choice of admissible functions for \cref{e:pdr-bound}, but does not guarantee that these bounds are sharp (i.e., that $\occBound = \pdrBound$).
So far, such sharpness has been reported for particular problems where in fact $\pdrBound=\optimalValue$, including variational problems evaluating eigenvalues of Sturm--Liouville problems and optimal constants of Poincar\'e inequalities~\cite{Chernyavskiy2020}. To help separate the issues of computing $\occBound$ versus $\optimalValue$, here we prove that $\occBound = \pdrBound$ under general conditions. We turn to the question of whether $\optimalValue = \occBound$ in \cref{s:convex-integrands}.

\begin{theorem}\label{th:strong-duality-noncompact}
    The equality $\occBound = \pdrBound$ holds if there exists $\phi_0 \in \phiSpace{p}$, constants $q,r$ with $0 \leq r < q \leq p$, and a constant $\beta > 0$ such that
    \begin{subequations}
        \label{e:strong-duality-conditions-all}
        \begin{align}
        \label{e:strong-duality-conditions-coercivity-bulk}
            f(x,y,z) + \D\phi_0(x,y,z) \geq \beta \left( \abs{y}^q + \abs{z}^q - 1 \right) &\quad\text{on } \Gamma,\\
        \label{e:strong-duality-conditions-coercivity-boundary}
            g(x,y) - \phi_0(x,y) \cdot \hat{n}(x) \geq \beta \left( \abs{y}^q - 1 \right) &\quad\text{on } \Lambda,\\
        \label{e:strong-duality-conditions-growth-a}
            \abs{a(x,y,z)}  \leq \beta \left(\abs{y}^r + \abs{z}^r\right) &\quad \text{on } \Gamma,\\
        \label{e:strong-duality-conditions-growth-b}
            \abs{b(x,y)} \leq \beta \abs{y}^r  &\quad \text{on } \Lambda.
        \end{align}
    \end{subequations}
\end{theorem}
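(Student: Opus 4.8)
The plan is to establish strong duality by an approximation-and-compactness argument, following the standard recipe for infinite-dimensional linear programming: one shows the primal value $\occBound$ is attained (or approximately attained) by measures lying in a set that is weak-$*$ compact, and that the dual optimal value $\pdrBound$ cannot be strictly smaller because otherwise one could separate. Concretely, the coercivity conditions~\cref{e:strong-duality-conditions-coercivity-bulk,e:strong-duality-conditions-coercivity-boundary} give an a priori bound: for any $(\mu,\nu)\in\omSpace{p}$, using the marginal constraints~\cref{e:marginal-constraints-mu,e:marginal-constraints-nu} to evaluate $\langle \D\phi_0,\mu\rangle-\langle\phi_0\cdot\hat n,\nu\rangle$ against the constant from~\cref{e:divergence-constraints-measures} (it vanishes), we get
\begin{equation*}
\langle f,\mu\rangle+\langle g,\nu\rangle
= \langle f+\D\phi_0,\mu\rangle + \langle g-\phi_0\cdot\hat n,\nu\rangle
\geq \beta\big(\langle\abs{y}^q+\abs{z}^q,\mu\rangle+\langle\abs{y}^q,\nu\rangle\big) - \beta(\abs{\Omega}+\abs{\partial\Omega}).
\end{equation*}
Hence along any minimizing sequence the masses $\langle 1,\mu_k\rangle=\abs{\Omega}$, $\langle 1,\nu_k\rangle=\abs{\partial\Omega}$ are fixed and the $q$-th moments are uniformly bounded; since $q$ is strictly larger than the growth rate $r$ of $a,b$, the constraint functional $\langle a,\mu\rangle+\langle b,\nu\rangle$ is "compact" relative to the objective. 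This strict-gap $r<q$ is precisely what makes the perturbed value function continuous at the relevant point, which is the hypothesis powering a Rockafellar-type strong-duality theorem.

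**The key steps, in order.** First, rewrite the primal problem~\cref{e:occ-measures-minimization} in the canonical LP form $\inf\{\langle c,\zeta\rangle : \zeta\in\mathcal C,\ \mathcal A\zeta = \zeta_0\}$, where $\zeta=(\mu,\nu)$, $\mathcal C=\measSpace$ is the cone of $p$-moment-bounded measures on $\Gamma\times\Lambda$, and $\mathcal A$ collects the equality constraints~\cref{e:mu-nu-equalities,e:marginal-constraints-mu,e:marginal-constraints-nu,e:divergence-constraints-measures} mapping into the predual of $\mathbb V$. Second, identify the correct duality pairing: measures with bounded $p$-moments pair with continuous functions of $p$-growth, so one should work in the weighted space $C_w$ of functions $h$ with $h(x,y,z)/(1+\abs{y}^p+\abs{z}^p)$ bounded (or better, vanishing at infinity, to get a genuine Banach predual). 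Third, prove the value function $v(\xi):=\inf\{\langle f,\mu\rangle+\langle g,\nu\rangle : (\mu,\nu)\in\measSpace,\ \text{constraints}=\xi\}$ is lower semicontinuous and bounded below near $\xi=0$, using the a priori bound above together with weak-$*$ compactness of sublevel sets (Prokhorov-type, upgraded to control the $q$-moments, which prevents escape of mass to infinity). Fourth, invoke the standard convex-duality theorem (e.g. \cite{Rockafellar}) that lower semicontinuity / subdifferentiability of $v$ at $0$ implies $\occBound=v(0)=v^{**}(0)=\pdrBound$ with no gap; check the dual of this abstract problem really is~\cref{e:pdr-bound}, i.e. that the adjoint $\mathcal A^*$ produces exactly $F^{\phi,\eta,h}\geq 0$ on $\Gamma$ and $G^{\phi,\eta,\ell}\geq 0$ on $\Lambda$, which is a bookkeeping exercise already half-done in the proof of \cref{th:weak-duality}.

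**The main obstacle** is the non-compactness of the underlying cones of measures: $\measSpace$ consists of measures on the possibly unbounded sets $\Gamma,\Lambda$ with only $p$-growth bounds, and the objective $f$ itself has at most $p$-growth, so a minimizing sequence could a priori lose mass to infinity in the $(y,z)$-variables, and the limit measure might fail to satisfy the moment constraints with equality or might concentrate in a way that drops the objective below $\pdrBound$. Overcoming this is exactly where the strict inequality $q>r$, together with $q\le p$, is used: the coercivity at rate $q$ forces tightness of the $q$-moments along minimizing sequences (so no mass escapes), while $r<q$ guarantees that the constraint map $(\mu,\nu)\mapsto\langle a,\mu\rangle+\langle b,\nu\rangle$ is continuous on $q$-moment-bounded, weak-$*$ convergent sequences — genuinely compact relative to the objective. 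A secondary technical point is choosing the function spaces so that the predual is correct and Banach (one typically uses the "$C_0$ at infinity" weighted space rather than all of weighted $C_b$, so that bounded sets in the dual are weak-$*$ sequentially compact); I expect this to be a routine but careful adaptation. I would also need to double-check that the presence of the linear-algebraic constraint~\cref{e:mu-nu-equalities} with a vector-valued $a$ does not interfere — it is handled uniformly since its growth is controlled by~\cref{e:strong-duality-conditions-growth-a,e:strong-duality-conditions-growth-b}.
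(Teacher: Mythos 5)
Your overall strategy---use the coercivity \cref{e:strong-duality-conditions-coercivity-bulk,e:strong-duality-conditions-coercivity-boundary} to get uniform moment bounds and tightness, invoke Prokhorov-type weak-$\ast$ compactness, and then close the gap with an abstract convex-duality (value-function) theorem---is the same in spirit as the paper's proof, which uses the Brezis--Nirenberg--Stampacchia minimax theorem with exactly this compactness input. However, there is a genuine gap in your argument: you never address the constraints \cref{e:divergence-constraints-measures}, which are indexed by test functions $\phi \in \phiSpace{p}$ whose total divergence $\D\phi$ grows like $1+\abs{y}^p+\abs{y}^{p-1}\abs{z}$. Your compactness only controls $q$-th moments with $q \leq p$ (possibly $q<p$), and weak-$\ast$ convergence of measures is tested against compactly supported functions; so along a weak-$\ast$ convergent (sub)sequence with uniformly bounded $q$-moments you cannot pass to the limit in $\langle \D\phi,\mu_k\rangle - \langle \phi\cdot\hat n,\nu_k\rangle$ for general $\phi\in\phiSpace{p}$, and when $q<p$ the limit measure need not even have finite $p$-moments, i.e.\ need not lie in $\measSpace$. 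Consequently the lower semicontinuity/closedness of your value function $v$ at $0$ (the step that powers the Rockafellar-type theorem) does not follow from what you have established: your remark that the strict gap $r<q$ makes the constraint map ``compact relative to the objective'' only covers the scalar constraint with integrands $a,b$, not the infinite family of divergence constraints. The same mismatch affects the objective after your translation by $\phi_0$, since $\D\phi_0$ itself has $p$-growth.

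This is precisely the difficulty the paper spends most of its proof on. Its fix is two-step: first restrict the divergence constraints to compactly supported multipliers $\phi\in\phiSpace{p}_c$, which makes the Lagrangian weak-$\ast$ lower semicontinuous in $(\mu,\nu)$ (the negative part of $F^{\phi_0+\phi,\eta,h}$ becomes compactly supported thanks to \cref{e:strong-duality-conditions-coercivity-bulk,e:strong-duality-conditions-growth-a}), so that the minimax theorem applies and gives strong duality for the weakened problem over $\omSpace{p}_c$; second, a separate truncation lemma (cutting off $\phi$ at $\abs{y}\sim r$ and using that each individual measure in $\measSpace$ has finite $p$-moments) shows that $\omSpace{p}_c=\omSpace{p}$, so the weakened and original primal problems coincide and the translation by $\phi_0$ does not change the value. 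To rescue your proposal you would need an analogue of both steps: either build the compact-support restriction and the truncation argument into your definition of the constraint map and perturbation space, or find another mechanism that lets $p$-growth test functions pass to weak-$\ast$ limits under only $q$-moment control; as written, the duality-pairing choice (weighted $C_0$ space) does not accomplish this, because $\D\phi/(1+\abs{y}^p+\abs{z}^p)$ does not vanish at infinity. A secondary, fixable issue is that your perturbation space is indexed by the non-normable space $\phiSpace{p}$, so the topological hypotheses of the Rockafellar-type theorem (continuity or subdifferentiability of $v$ at $0$) require care; the paper sidesteps this because the minimax theorem it uses only needs upper semicontinuity on finite-dimensional sections of the multiplier space.
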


Before giving the proof, let us pause to discuss the role of $\phi_0$. So far, we have yet to invoke any sort of `coercivity' hypotheses on the integrands $f$ and $g$ from the original minimization  \cref{e:inf-problem-general}. If $f$ and $g$ are \emph{coercive}, in the sense that \cref{e:strong-duality-conditions-coercivity-bulk,e:strong-duality-conditions-coercivity-boundary} hold with $\phi_0=0$, then strong duality follows immediately from our $r$-growth assumptions on $a$ and $b$. A subcase of this that has received prior attention~\cite{Korda2018} is when the constraints from \cref{e:inf-problem-general} give compact $\Gamma$ and $\Lambda$. 
Here, we include $\phi_0$ to allow for `translations' of the integrands $f$ and $g$ that achieve coercivity while preserving the value of the functional from~\cref{e:inf-problem-general}. (That this value is preserved under the replacement $(f,g)\mapsto (f+\D\phi_0,\,g-\phi_0\cdot\hat{n})$ follows from the divergence theorem.) We imagine such translations could be useful for verifying that $\optimalValue>-\infty$ in cases where the original integrand is not evidently bounded from below.

We turn now to proving \cref{th:strong-duality-noncompact}. The proof is somewhat technical and the theorem is not used in the rest of the paper, so the reader who wishes to skip forward to the problem of proving that $\occBound=\optimalValue$ may proceed to \cref{s:convex-integrands}.

%%%%%%%%%%%%%
\begin{proof}[Proof of \cref{th:strong-duality-noncompact}]

Let $C^1_c(\Omega \times \R^m; \R^n)$ be the space of continuously differentiable $n$-variate functions with compact support on $\Omega \times \R^m$. Define
\begin{subequations}
\begin{align}
    \phiSpace{p}_c &:= \phiSpace{p} \cap C^1_c(\Omega \times \R^m; \R^n),\\
    \mathbb{V}_c &:=  \phiSpace{p}_c
	\times \R
	\times C(\overline{\Omega})
	\times C(\partial\Omega).
\end{align}
\end{subequations}
Further, let $\omSpace{p}_c \subset \measSpace$ be the set of measures obtained from the measure set $\omSpace{p}$ defined in~\cref{e:A-set-definition} by relaxing the `divergence theorem' condition~\cref{e:divergence-constraints-measures} to hold only for compactly supported $\phi \in \phiSpace{p}_c$. We shall prove that
\begin{subequations}
\begin{align}
    \pdrBound
    &\geq
    \sup_{ \substack{(\phi,\eta,h,\ell)\in\mathbb{B} \\ \phi - \phi_0 \in \phiSpace{p}_c} }
    \left\{ \int_\Omega h(x) \dVolume + \int_{\partial\Omega} \ell(x) \dSurf  \right\}
    \label{e:strong-duality-noncompatc-step-1}
    \\
    &= \inf_{(\mu,\nu) \in \omSpace{p}_c} \left\{\langle f + \D\phi_0, \mu \rangle + \langle g - \phi_0 \cdot \hat{n}, \nu \rangle\right\}
    \label{e:strong-duality-noncompatc-step-2}
    \\
    &= \occBound.
    \label{e:strong-duality-noncompatc-step-3}
\end{align}
\end{subequations}
This, combined with the inequality $\occBound \geq \pdrBound$ from \cref{th:weak-duality}, implies the desired identity $\occBound = \pdrBound$. Inequality~\cref{e:strong-duality-noncompatc-step-1} is immediate, while identities \cref{e:strong-duality-noncompatc-step-2,e:strong-duality-noncompatc-step-3} are proven separately below.

\paragraph{Proof of \cref{e:strong-duality-noncompatc-step-2}}
Reasoning as in the proof of \cref{th:weak-duality} shows that the maximization on the right-hand side of~\cref{e:strong-duality-noncompatc-step-1} and the minimization on the right-hand side of~\cref{e:strong-duality-noncompatc-step-2} are weakly dual problems. The equality in~\cref{e:strong-duality-noncompatc-step-2} expresses strong duality and holds if the order of minimization and maximization in the minimax problem
\begin{equation}
    \inf_{(\mu,\nu) \in \measSpace}
    \sup_{(\phi,\eta,h,\ell) \in \mathbb{V}_c}
    \!\!\LagrangeFunction[(\mu,\nu), (\phi_0 + \phi,\eta,h,\ell)]
\end{equation}
is irrelevant, where the Lagrangian function $\LagrangeFunction$ is defined in~\cref{e:Lagrangian-def}.
This fact can be established with the help of a minimax theorem by Brezis, Stampacchia and Nirenberg~\cite{Brezis1972}.
Here, we use a version of theorem given in~\cite[Theorem 5.2.2]{Nirenberg2001}, which requires checking the following conditions:
\begin{enumerate}[(BNS\arabic{*}),leftmargin=!,align=left,labelwidth=\widthof{(BNS5)},topsep=0.75ex,itemsep=0.5ex]
	\item\label{bns:vs} The set $\mathbb{V}_c$ is a convex subset of a real vector space.
	\item\label{bns:hausdorff} The set $\measSpace$ is a convex subset of a Hausdorff topological vector space.
	\item\label{bns:usc-qc} For all $(\mu,\nu) \in \measSpace$, the function $(\phi,\eta,h,\ell) \mapsto \LagrangeFunction[(\mu,\nu),(\phi,\eta,h,\ell)]$ is upper semicontinuous and quasiconcave\footnote{In this section, a real-valued function $f$ is said to be quasiconvex (resp. quasiconcave) if the pre-image of the interval $(-\infty,a)$ is a convex (resp. concave) set for every $a \in \R$. This definition arises in convex analysis and should not be confused with the completely different notion of quasiconvexity encountered in relaxation theory, which will be used in \cref{ss:failure-double-well}.} on the intersection of $\mathbb{V}_c$ with any finite-dimensional space.
	\item\label{bns:lsc-qc} For all $(\phi,\eta,h,\ell) \in \mathbb{V}_c$, the function $(\mu,\nu) \mapsto \LagrangeFunction[(\mu,\nu),(\phi,\eta,h,\ell)]$ is lower semicontinuous and quasiconvex on $\measSpace$.
	\item\label{bns:compactness} There exists $(\tilde{\phi},\tilde{\eta},\tilde{h},\tilde{\ell}) \in \mathbb{V}_c$ and a constant $\kappa$ satisfying
	$\kappa > \smash{\sup_{\mathbb{V}_c} \inf_{\measSpace} \LagrangeFunction}$
	such that the set $\{(\mu,\nu) \in \measSpace: \LagrangeFunction[(\mu,\nu), (\tilde{\phi},\tilde{\eta},\tilde{h},\tilde{\ell})] \leq \kappa \}$ is compact.
\end{enumerate}

For conditions \ref{bns:vs} and \ref{bns:hausdorff}, observe that $\mathbb{V}_c$ is a real vector space by construction, while $\measSpace$ is a convex subset of the product of the spaces $\mathcal{M}^p(\Gamma)$ and $\mathcal{M}^p(\Lambda)$ of signed Radon measures supported on $\Gamma$ and $\Lambda$ and with bounded moments of order $p$. This is a Hausdorff space when endowed with the product \weakstar\ topology. Recall that a sequence of measures $\mu_k \in \mathcal{M}^p(\Gamma)$ is said to converge \weakstar\ to a measure $\mu \in \mathcal{M}^p(\Gamma)$ if
\begin{equation}
	\langle \psi, \mu_k \rangle \to \langle \psi, \mu \rangle
\end{equation}
for all functions $\psi \in C(\Gamma)$ with compact support. In this case, we write $\mu_k \wkstarto \mu$.

For condition \ref{bns:usc-qc}, let $\mathbb{V}_d \subset \mathbb{V}$ be any $d$-dimensional space and expand its elements as linear combinations of $d$ fixed basis functions. The function $(\phi,\eta,h,\ell) \mapsto \LagrangeFunction[(\mu,\nu),(\phi,\eta,h,\ell)]$ is affine (hence, quasiconvex) in the $d$ expansion coefficients. It is also continuous (hence, upper semicontinuous)  in the usual topology on $\R^d$.

For condition \ref{bns:lsc-qc}, note that the function $(\mu,\nu) \mapsto \LagrangeFunction[(\mu,\nu),(\phi_0+\phi,\eta,h,\ell)]$ is affine for every fixed $(\phi,\eta,h,\ell) \in \mathbb{V}_c$, so in particular it is quasiconvex. To establish its \weakstar\ lower semicontinuity, instead, we need to show that
\begin{subequations}
\begin{align}
    \label{e:F-wlsc}
    \liminf_{\mu_k \wkstarto \mu } \langle F^{\phi_0+\phi, \eta, h}, \mu_k \rangle &\geq \langle F^{\phi_0+\phi, \eta, h}, \mu \rangle,
    \\
    \label{e:G-wlsc}
    \liminf_{\nu_k \wkstarto \nu } \langle G^{\phi_0+\phi, \eta, \ell}, \nu_k \rangle &\geq \langle G^{\phi_0+\phi, \eta, \ell}, \nu \rangle.
\end{align}
\end{subequations}
Since $\phi$ has compact support and $h$ is continuous on $\overline{\Omega}$, we can use the assumed coercivity of $f+\D\phi_0$ in~\cref{e:strong-duality-conditions-coercivity-bulk} and the bounded growth of $a(x,y,z)$ from~\cref{e:strong-duality-conditions-growth-a} to conclude that $F^{\phi_0+\phi, \eta, h}$ is coercive on $\Gamma$, i.e.,
\begin{equation*}
    F^{\phi_0+\phi, \eta, h}(x,y,z)
    = [f + \D\phi_0 + \D\phi - \eta a - h](x,y,z)%\\ \nonumber
    \gtrsim \abs{y}^q + \abs{z}^q - 1 \quad \text{on }\Gamma.
\end{equation*}
In particular, the negative part $F_{-}^{\phi_0+\phi, \eta, h}$ of $F^{\phi_0+\phi, \eta, h}$ is compactly supported on the (possibly noncompact) support $\Gamma \subseteq \overline{\Omega} \times \R^m \times \R^{m\times n}$ of the measures $\mu_k$ and $\mu$. Then, writing $\smash{F_{+}^{\phi_0+\phi, \eta, h}}$ for the positive part of $\smash{F^{\phi_0+\phi, \eta, h}}$ and using the definition of \weakstar\ convergence of measures,
\begin{align*}
    \liminf_{\mu_k \wkstarto \mu } \langle F^{\phi_0+\phi, \eta, h}, \mu_k \rangle
    &= \liminf_{\mu_k \wkstarto \mu } \left[ \langle F_+^{\phi_0+\phi, \eta, h}, \mu_k \rangle - \langle F_-^{\phi_0+\phi, \eta, h}, \mu_k \rangle\right]
    \\ \nonumber
    &= \liminf_{\mu_k \wkstarto \mu } \langle F_+^{\phi_0+\phi, \eta, h}, \mu_k \rangle -  \langle F_-^{\phi_0+\phi, \eta, h}, \mu \rangle.
\end{align*}
To obtain~\cref{e:F-wlsc}, let $\{\chi_m\}_{m\geq 0}$ be a partition of unity of $\Gamma$ and estimate
\begin{align*}%\label{e:wlsc-step1}
    \liminf_{\mu_k \wkstarto \mu } \langle F_+^{\phi_0+\phi, \eta, h}, \mu_k \rangle
    &= \liminf_{\mu_k \wkstarto \mu } \left\langle \sum_{m\geq 0} \chi_m F_+^{\phi_0+\phi, \eta, h} , \mu_k \right\rangle
    \\ \nonumber
    &= \liminf_{\mu_k \wkstarto \mu } \sum_{m\geq 0} \langle\chi_m F_+^{\phi_0+\phi, \eta, h}, \mu_k \rangle
    \\ \nonumber
    \text{(Fatou's lemma)}
    &\geq \sum_{m\geq 0} \liminf_{\mu_k \wkstarto \mu } \langle \chi_m F_+^{\phi_0+\phi, \eta, h}, \mu_k \rangle
    \\ \nonumber
    \text{(\weakstar\ convergence)}
    &= \sum_{m\geq 0} \langle \chi_m F_+^{\phi_0+\phi, \eta, h}, \mu \rangle
    \\ \nonumber
    &= \langle F_+^{\phi_0+\phi, \eta, h}, \mu \rangle.
\end{align*}
Inequality~\cref{e:G-wlsc} is proved using nearly identical steps based on assumptions \cref{e:strong-duality-conditions-coercivity-boundary} and \cref{e:strong-duality-conditions-growth-b}, which we omit for brevity. Condition~\ref{bns:lsc-qc} is therefore established.

There remains to verify condition~\ref{bns:compactness}. First, we fix a suitable $\kappa$. Inequalities \cref{e:occ-measures-minimization} and \cref{e:strong-duality-noncompatc-step-1} and steps similar to those in the proof of \cref{th:weak-duality} imply that
\begin{equation*}
\optimalValue
\geq
\sup_{(\phi,\eta,h,\ell) \in \mathbb{V}_c} \inf_{(\mu,\nu) \in \measSpace} \LagrangeFunction[(\mu,\nu), (\phi+\phi_0,\eta,h,\ell)],
\end{equation*}
where $\optimalValue$ is the global minimum of the variational problem~\cref{e:inf-problem-general}. We may assume that $\optimalValue > -\infty$, otherwise \cref{th:strong-duality-noncompact} holds trivially with $\occBound = \pdrBound = -\infty$.
Then, $\kappa=1+\optimalValue$ satisfies the strict inequality required by condition~\ref{bns:compactness}.
Next, we set $(\tilde{\phi},\tilde{\eta},\tilde{h},\tilde{\ell})=(0,0,-1-\beta,-1-\beta) \in \mathbb{V}_c$, where $\beta$ is the constant appearing in~\cref{e:strong-duality-conditions-all}, and check the \weakstar\ compactness of the set
\begin{equation*}
\mathbb{K} := \{(\mu,\nu) \in \mathbb{M}: \LagrangeFunction[(\mu,\nu), (\phi_0,0,-1-\beta,-1-\beta)] \leq 1+\optimalValue \}.
\end{equation*}
This follows from Prokhorov's theorem (see, e.g.,~\cite[Theorem~8.6.2]{Bogachev2007}) if we can show that $\mathbb{K}$ is sequentially \weakstar\ closed, bounded in the total variation norm, and uniformly tight---meaning that, for every $\varepsilon>0$, there exist compact sets $\Lambda_\varepsilon \subseteq \Lambda$ and $\Gamma_\varepsilon \subseteq \Gamma$ such that $\mu(\Gamma \setminus \Gamma_\varepsilon) + \nu(\Lambda \setminus \Lambda_\varepsilon) \leq \varepsilon$ whenever
$(\mu,\nu) \in \mathbb{K}$.
Sequential closedness follows from the \weakstar\ lower semicontinuity of the function $(\mu,\nu) \mapsto \LagrangeFunction[(\mu,\nu), (\phi_0,0,-1-\beta,-1-\beta)]$, which was proven above. For boundedness and uniform tightness, instead, observe that the estimates in~\cref{e:strong-duality-conditions-all} imply
\begin{align}\label{e:noncompact-proof-estimates-1}
    1+\optimalValue
    &> \LagrangeFunction[(\mu,\nu), (\phi_0,0,-1-\beta,-1-\beta)]
    \\ \nonumber
    &= \langle f + \D\phi_0 + 1+\beta, \mu \rangle + \langle g - \phi_0 \cdot \hat{n} + 1+\beta, \nu \rangle
    \\ \nonumber
    &\geq \langle \beta\abs{y}^q + \beta\abs{z}^q + 1, \mu \rangle + \langle \beta\abs{y}^q + 1, \nu \rangle
\end{align}
for all $(\mu,\nu) \in \mathbb{K}$. The last expression is bounded below by $\langle 1, \mu \rangle + \langle 1, \nu \rangle  = \|(\mu,\nu)\|$, proving that $\mathbb{K}$ is bounded in the total variation norm.
Moreover, given any $\varepsilon>0$, one can consider the compact sets
$\Gamma_\varepsilon = \Gamma \cap \{(x,y,z): \abs{y}^q + \abs{z}^q \leq \varepsilon^{-1}\beta^{-1}(1+\optimalValue)\}$  and
$\Lambda _\varepsilon = \Lambda \cap \{(x,y): \abs{y}^q \leq \varepsilon^{-1}\beta^{-1}(1+\optimalValue)\}$
and drop the positive terms $\langle 1,\mu\rangle+\langle1,\nu\rangle$ from the right-hand side of~\cref{e:noncompact-proof-estimates-1} to obtain
\begin{align*}
  \beta^{-1}(1+\optimalValue)
  &> \langle \abs{y}^q + \abs{z}^q, \mu \rangle + \langle \abs{y}^q , \nu \rangle
  \\ \nonumber
  &\geq \int_{\Gamma \setminus \Gamma_\varepsilon} \abs{y}^q + \abs{z}^q \,{\rm d}\mu + \int_{\Lambda \setminus \Lambda_\varepsilon} \abs{y}^q \,{\rm d}\nu
  \\ \nonumber
  &\geq \varepsilon^{-1}\beta^{-1}(1+\optimalValue) \left[ \mu(\Gamma \setminus \Gamma_\varepsilon)  + \nu(\Lambda \setminus \Lambda_\varepsilon) \right].
\end{align*}
Rearranging this inequality shows that the set of measures $\mathbb{K}$ is uniformly tight.
Condition~\ref{bns:compactness} is therefore verified, concluding the proof of identity~\cref{e:strong-duality-noncompatc-step-2}.

\paragraph{Proof of \cref{e:strong-duality-noncompatc-step-3}}
We use a truncation argument to prove that if the `divergence theorem' identity $\langle \D\phi,\mu \rangle = \langle \phi \cdot \hat{n}, \nu\rangle$ holds for all compactly supported $\phi\in\phiSpace{p}_c$, then it holds for general $\phi \in \phiSpace{p}$.
This implies $\omSpace{p}_c \equiv \omSpace{p}$ and establishes~\cref{e:strong-duality-noncompatc-step-3} because $\langle f + \D\phi_0, \mu \rangle + \langle g - \phi_0 \cdot \hat{n}, \nu \rangle = \langle f, \mu \rangle + \langle g , \nu \rangle$  for all $(\mu,\nu) \in \omSpace{p}$.

It suffices to show that, for every $\phi \in \phiSpace{p}$, every pair $(\mu,\nu) \in \measSpace$, and every $\varepsilon>0$, there exists $r>0$ and $\phi_r \in \phiSpace{p}_c$ such that
\begin{subequations}
\begin{align}\label{e:truncation-step-0a}
    \abs{\langle \D\phi - \D\phi_r, \mu \rangle } &\leq \varepsilon, \\
    \abs{\langle \phi\cdot\hat{n} - \phi_r\cdot\hat{n}, \nu \rangle } &\leq \varepsilon.
    \label{e:truncation-step-0b}
\end{align}
\end{subequations}

Let $\Psi_r: \R_+ \to [0,1]$ be a smooth cut-off function defined via
\begin{equation*}
    \Psi_r(\xi) = \begin{cases}
    1 & \text{if } \xi \leq r^2\\
    \psi_{r}(\xi) & \text{if } r^2 \leq \xi \leq 4r^2\\
    0 & \text{if } \xi \geq 4r^2,
    \end{cases}
\end{equation*}
where $\psi_r: \R_+ \to [0,1]$ is a monotonically decreasing function satisfying
\begin{equation}\label{e:cut-off-rate}
    \abs{\psi_r'(\xi)} \leq 4r^{-2}.
\end{equation}
Set $\phi_r(x,y) = \Psi_r(\abs{y}^2) \phi(x,y)$ and, for every constant $\gamma>0$, define the set $\Gamma_\gamma := \{(x,y,z) \in \Gamma: \abs{y} \leq \gamma \}$. Then,
\begin{equation}\label{e:truncation-step-1}
    \abs{\langle \D\phi - \D\phi_r, \mu \rangle}
    \leq \langle \abs{\D\phi - \D\phi_r}, \mu \rangle
    \leq \mathcal{I}_1 + \mathcal{I}_2,
\end{equation}
where (summing over repeated indices to lighten the notation)
\begin{align*}
\mathcal{I}_1 &:= \int_{\Gamma \setminus \Gamma_r} \left[ 1 - \Psi_r(\abs{y}^2) \right]
    \left( \abs{ \frac{\partial \phi_i}{\partial x_i} } + \abs{ \frac{\partial \phi_i}{\partial y_j} } \abs{ z_{ji} } \right) {\rm d}\mu,
    \\
    \mathcal{I}_2&:= 2\int_{\Gamma_{2r} \setminus \Gamma_r} \abs{\Psi_r'(\abs{y}^2)} \abs{\phi_i} \abs{y_j} \abs{ z_{ji} } {\rm d}\mu.
\end{align*}
Using the inequality $\Psi_r(\abs{y}^2) \leq 1$, inequality~\cref{e:cut-off-rate}, the growth conditions on $\phi$ from~\cref{e:Phi-space}, the fact that $\abs{y} \leq 2r$ on $\Gamma_{2r} \setminus \Gamma_r$, and the Cauchy--Schwarz inequality, we can estimate
\begin{subequations}
\begin{align}
    \label{e:estimate-I1}
    \mathcal{I}_1
    &\leq \int_{\Gamma \setminus \Gamma_r}
    \abs{ \frac{\partial \phi_i}{\partial x_i} } + \abs{ \frac{\partial \phi_i}{\partial y_j} } \abs{ z_{ji} } \, {\rm d}\mu
    \lesssim \int_{\Gamma \setminus \Gamma_r}
    1 + \abs{y}^p + \abs{z} + \abs{y}^{p-1} \abs{z} \, {\rm d}\mu,
    \intertext{and}
    \mathcal{I}_2
    &\lesssim \int_{\Gamma_{2r} \setminus \Gamma_r} r^{p-1} \abs{z} {\rm d}\mu
    \lesssim r^{p-1} \bigg( \int_{\Gamma_{2r} \setminus \Gamma_r} \abs{z}^p {\rm d}\mu \bigg)^{\frac1p} \abs{\mu(\Gamma_{2r} \setminus \Gamma_r)}^{\frac{p-1}{p}}.
    \label{e:estimate-I2}
\end{align}
\end{subequations}
Moreover, since the (fixed) measure $\mu$ has bounded moments of degree $p$,
\begin{equation*}
    \mu(\Gamma_{2r} \setminus \Gamma_r)
    \leq \mu(\Gamma \setminus \Gamma_r)
    \leq \frac{1}{r^p} \int_{\Gamma \setminus \Gamma_r} r^{p} \,{\rm d}\mu
    \leq \frac{1}{r^p} \int_{\Gamma} \abs{y}^{p} \,{\rm d}\mu
    \sim \frac{1}{r^p}.
\end{equation*}
Thus, we can replace~\cref{e:estimate-I2} with
\begin{equation}\label{e:estimate-I2-better}
    \mathcal{I}_2
    \lesssim \bigg( \int_{\Gamma_{2r} \setminus \Gamma_r} \abs{z}^p {\rm d}\mu \bigg)^{\frac1p}
    \lesssim \bigg( \int_{\Gamma \setminus \Gamma_r} \abs{z}^p {\rm d}\mu \bigg)^{\frac1p}.
\end{equation}
Now, the right-hand sides of~\cref{e:estimate-I1,e:estimate-I2-better} tend to zero as $r$ tends to infinity because $\Gamma \setminus \Gamma_r$ increases to $\Gamma$ and $\mu$ has bounded moments of degree $p$ or less. In particular, there exists $r$ large enough that $\mathcal{I}_1 + \mathcal{I}_2 \leq \varepsilon$. Combining this with~\cref{e:truncation-step-1} yields~\cref{e:truncation-step-0a}, as desired.
Inequality~\cref{e:truncation-step-0b} follows from similar estimates using the fact that $\abs{\hat{n}}=1$, which are omitted for brevity. Identity~\cref{e:strong-duality-noncompatc-step-3}, and therefore \cref{th:strong-duality-noncompact}, are proved.
\end{proof}

%%%%%%%%%%%%%%%%%%%%%%%%%%%%%%%%%%%%%%%%%%%%%%%%%%%%%%%%%%%%%%%%%%%%%%%%%%%%%%%%
\section{Sharpness of occupation measure bounds}\label{s:convex-integrands}

We now ask whether $\occBound$ and $\pdrBound$ are sharp lower bounds on the optimal value $\optimalValue$ of the original minimization problem~\cref{e:inf-problem-general}, i.e., whether $\optimalValue = \occBound = \pdrBound$. These identities are proved in~\cite{Chernyavskiy2020} for particular classes of quadratic problems. Here, we establish a different sharpness result covering problems in the form
\begin{equation}\label{e:convex-integrands}
    \optimalValue = \inf_{ u \in W_0^{1,p}(\Omega; \R^m) } \int_{\Omega} f_0(x,\nabla u) + f_1(x,u) \, \dVolume
\end{equation}
where $\Omega \subset \mathbb{R}^n$ is a bounded Lipschitz domain, $u$ is subject only to the homogeneous Dirichlet boundary conditions $u=0$, and the functions $f_0$, $f_1$ are continuous in both arguments and convex in their second arguments. 

\begin{theorem}\label{th:sharpness}
    Consider the variational problem \cref{e:convex-integrands} where  $\Omega$ is a bounded Lipschitz domain, $p>1$, and the functions $f_0$ and $f_1$ are continuous. Suppose that:
    \begin{enumerate}[(H\arabic{*}),leftmargin=\parindent,align=left,labelwidth=!,topsep=0.75ex,itemsep=0.5ex]
        \item\label{ass:sharpness-convexity} The functions $z \mapsto f_0(x,z)$ and $y \mapsto f_1(x,y)$ are convex for every $x \in \Omega$.
        \item\label{ass:sharpness-growth} There exist constants $c_1, c_2, c_3 > 0$ such that
        \begin{align*}
            c_1\abs{z}^p - c_2 \leq f_0(x,z) &\leq c_3(\abs{z}^p + 1) \quad \forall x \in \Omega,\\
            c_1\abs{y}^p - c_2 \leq f_1(x,y) &\leq c_3(\abs{y}^p + 1) \quad \forall x \in \Omega.
        \end{align*}
    \end{enumerate}
    Then, $\optimalValue = \occBound = \pdrBound$. Furthermore, the maximization in \cref{e:pdr-bound} can be carried out with $\eta=0$, $\ell=0$ and $\phi \in \phiSpace{p}$ of the form $\phi(x,y)=\sigma(x)y$ with $\sigma \in \smash{C^1(\overline{\Omega};\R^{m \times n})}$ without changing its value.
\end{theorem}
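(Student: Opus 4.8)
The plan is to prove $\pdrBound\ge\optimalValue$; since $\optimalValue\ge\occBound$ by~\cref{e:occ-measures-minimization} and $\occBound\ge\pdrBound$ by \cref{th:weak-duality}, this forces $\optimalValue=\occBound=\pdrBound$. I will do this by exhibiting admissible quadruples for the maximization~\cref{e:pdr-bound}, of the special form stated in the theorem, whose objectives approach $\optimalValue$. To fix notation, note that \cref{e:convex-integrands} is the instance of \cref{e:inf-problem-general} with $g\equiv0$, $a\equiv b\equiv0$, $c\equiv0$ and $d(x,y)=y$, so that $\Gamma=\overline\Omega\times\R^m\times\R^{m\times n}$ and $\Lambda=\partial\Omega\times\{0\}$ in~\cref{e:Gamma-def,e:Lambda-def}. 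For $\phi\in\phiSpace{p}$ linear in $y$ one has $\phi(x,0)=0$, so by~\cref{e:G-def} the boundary constraint in~\cref{e:B-set} reduces to $-\ell(x)\ge0$ on $\partial\Omega$, making $\ell\equiv0$ optimal; and since $\eta$ couples only to $a\equiv0$, we fix $\eta=0$.

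The first key point is that, for $\phi(x,y)=\sigma(x)y$ with $\sigma\in C^1(\overline\Omega;\R^{m\times n})$ contracted so that the total divergence~\cref{e:total-divergence} equals $\mathcal D\phi(x,y,z)=-\sigma(x){:}z-(\operatorname{div}\sigma(x)){\cdot}y$ (replacing $\sigma$ by $-\sigma$ to match the sign in the statement), the pointwise constraint $F^{\phi,0,h}\ge0$ on $\Gamma$ from~\cref{e:F-def} decouples: writing $h=h_0+h_1$, it follows from $f_0(x,z)-\sigma{:}z\ge h_0(x)$ and $f_1(x,y)-(\operatorname{div}\sigma){\cdot}y\ge h_1(x)$, and by the Fenchel--Young inequality together with~\ref{ass:sharpness-convexity} the sharpest admissible choices are $h_0=-f_0^*(\cdot,\sigma)$ and $h_1=-f_1^*(\cdot,\operatorname{div}\sigma)$, with $f_0^*,f_1^*$ the Legendre transforms in the convex variable. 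By~\ref{ass:sharpness-growth} these conjugates are finite, jointly continuous, and of $p'$-growth ($p'=p/(p-1)$); hence $\phi\in\phiSpace{p}$ (affine in $y$ with bounded $C^1$ coefficients) and $h:=-f_0^*(\cdot,\sigma)-f_1^*(\cdot,\operatorname{div}\sigma)\in C(\overline\Omega)$. Thus every such $\sigma$ yields $(\phi,0,h,0)\in\mathbb{B}$ with objective $-\int_\Omega f_0^*(x,\sigma)+f_1^*(x,\operatorname{div}\sigma)\dVolume$, and the theorem reduces to showing $\optimalValue\le\sup_{\sigma\in C^1(\overline\Omega;\R^{m\times n})}\bigl(-\int_\Omega f_0^*(x,\sigma)+f_1^*(x,\operatorname{div}\sigma)\dVolume\bigr)$.

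To obtain this I would first recognize the same supremum over the larger class $\{\sigma\in L^{p'}(\Omega;\R^{m\times n}):\operatorname{div}\sigma\in L^{p'}(\Omega;\R^m)\}$ as the Fenchel--Rockafellar dual of~\cref{e:convex-integrands}. Writing $Au:=(u,\nabla u):W_0^{1,p}(\Omega;\R^m)\to X:=L^p(\Omega;\R^m)\times L^p(\Omega;\R^{m\times n})$ and $\mathcal J(v,w):=\int_\Omega f_1(x,v)+f_0(x,w)\dVolume$, assumption~\ref{ass:sharpness-growth} makes $\mathcal J$ finite and continuous on $X$ (continuity of superposition operators) and forces $\optimalValue>-\infty$, so the Ekeland--Temam form of Fenchel--Rockafellar duality applies with trivial qualification and gives $\optimalValue=\inf_u\mathcal J(Au)=\max\{-\mathcal J^*(\tau,\sigma):A^*(\tau,\sigma)=0\}$, the maximum being attained. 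A short computation gives $A^*(\tau,\sigma)=\tau-\operatorname{div}\sigma$ in $W^{-1,p'}$, so the constraint is $\operatorname{div}\sigma=\tau\in L^{p'}$, while Rockafellar's interchange-of-infimum-and-integral theorem identifies $\mathcal J^*(\tau,\sigma)=\int_\Omega f_1^*(x,\tau)+f_0^*(x,\sigma)\dVolume$; hence $\optimalValue=-\int_\Omega f_0^*(x,\sigma^*)+f_1^*(x,\operatorname{div}\sigma^*)\dVolume$ for some optimal $\sigma^*\in L^{p'}$ with $\operatorname{div}\sigma^*\in L^{p'}$. Finally, mollifying $\sigma^*$ (after extension across $\partial\Omega$) yields $\sigma_k\in C^\infty(\overline\Omega;\R^{m\times n})$ with $\sigma_k\to\sigma^*$ and $\operatorname{div}\sigma_k\to\operatorname{div}\sigma^*$ in $L^{p'}$ (density of smooth fields in $H^{p'}(\operatorname{div};\Omega)$ on a Lipschitz domain), and the $p'$-growth of $f_0^*,f_1^*$ makes the objectives $-\int_\Omega f_0^*(x,\sigma_k)+f_1^*(x,\operatorname{div}\sigma_k)\dVolume$ converge to $\optimalValue$. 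This proves $\pdrBound\ge\optimalValue$, hence $\optimalValue=\occBound=\pdrBound$; and since every dual point used has $\eta=0$, $\ell=0$ and $\phi$ linear in $y$ with $C^1$ coefficients, the ``furthermore'' claim follows at once.

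I expect the main obstacle to be the rigorous Fenchel--Rockafellar step: identifying $\mathcal J^*$ with the integral of the pointwise conjugates via the interchange theorem for integral functionals (checking normality and the $p$-growth so that no boundary contribution appears), and confirming that the dual optimizer genuinely satisfies $\operatorname{div}\sigma^*\in L^{p'}$ so that $f_1^*(x,\operatorname{div}\sigma^*)$ is well-defined and integrable. A secondary, routine point is the density of $C^1$ (indeed $C^\infty$) fields in $H^{p'}(\operatorname{div};\Omega)$ for a general Lipschitz domain, together with the $L^{p'}\to L^1$ continuity of $\sigma\mapsto f_i^*(\cdot,\sigma)$, which guarantees the convergence of the dual objectives.
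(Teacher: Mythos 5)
Your proposal is correct and takes essentially the same route as the paper: reduce the claim to $\pdrBound\geq\optimalValue$, represent $\optimalValue$ via Fenchel--Rockafellar duality as the supremum of $-\int_\Omega\bigl(f_0^*(x,\sigma)+f_1^*(x,\nabla\cdot\sigma)\bigr)\dVolume$ over divergence-constrained $L^q$ fields ($q=p/(p-1)$, cf.\ \cref{lemma:sharpness}), smooth the (near-)optimal fields up to $\overline{\Omega}$ (cf.\ \cref{lemma:continuity,lemma:smoothness}), and convert them via Fenchel--Young into admissible tuples $(\sigma(x)y,0,-f_0^*(\cdot,\sigma)-f_1^*(\cdot,\nabla\cdot\sigma),0)\in\mathbb{B}$. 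The only differences are cosmetic: you dualize the variational problem directly (computing the adjoint constraint and obtaining attainment of a dual optimizer), whereas the paper dualizes the flux problem and computes $\Psi^*$ by a Helmholtz decomposition, working with $\varepsilon$-near optimizers instead, and you cite the standard density of smooth fields in the $L^q$ divergence space where the paper proves it for completeness.
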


\begin{remark}
	The choice $\eta=0$ in \cref{e:pdr-bound} is natural because \cref{e:convex-integrands} has no integral constraints. The choice $\ell=0$, instead, is optimal because the functional being minimized involves no boundary integrals and, as shown in the proof, there is an optimizing sequence of $\phi$ such that the trace of $\phi(x,u(x))$ vanishes when $u \in  W_0^{1,p}(\Omega; \R^m)$.
\end{remark}

\begin{remark}\label{remark:sharpness-BCs} We use the homogeneous boundary condition $u=0$ for simplicity; one can handle non-zero boundary conditions $u=u_0$ by writing the minimization in terms of $v=u-u_0$, for suitably regular $u_0$. We leave this to the reader.
\end{remark}

\begin{remark}\label{remark:linear-case}
    The coercivity and growth conditions on $f_1$ in \ref{ass:sharpness-growth} can be replaced with the assumption that $f_1(x,y) = F(x) \cdot y$ for some smooth function $F$ with straightforward changes to the proof.
\end{remark}

\begin{remark}
   A more general version of \cref{th:sharpness} appeared in~\cite{hkkz2023arxiv} after this paper was submitted. It allows for integrands $f(x,u,\nabla u)$ that are jointly convex in $u$ and $\nabla u$, as well as for certain convex constraints.
\end{remark}

The proof of \cref{th:sharpness} is given in \cref{ss:sharpness-proof} and uses three technical lemmas established in \cref{ss:sharpness-lemmas}. There, we make extensive use of the Legendre transforms
\begin{gather*}
        f_0^*(x,z^*) = \sup_{z \in \R^{m\times n}} \{ z \cdot z^* - f_0(x,z) \},\\
        f_1^*(x,y^*) = \sup_{y \in \R^m} \{ y \cdot y^* - f_1(x,y) \}.
\end{gather*}
First, however, we show that the convexity assumption on $f_0$ can be removed for one-dimensional or scalar variational problems simply by replacing $f_0$ with its convexification (double Legendre transform) $f_0^{**}$. Thus, when $n=1$ or $m=1$ the occupation measure and pointwise dual relaxations are sharp even when the integrand in \cref{e:convex-integrands} is not convex in the gradient term.

\begin{corollary}\label{th:sharpness-corollary}
	Consider the variational problem \cref{e:convex-integrands} where $\Omega$ is a bounded Lipschitz domain, $p>1$, and $n=1$ or $m=1$. If the growth conditions \ref{ass:sharpness-growth} hold and the function $y \mapsto f_1(x,y)$ is convex for every $x \in \Omega$, then $\optimalValue = \occBound = \pdrBound$.
	Moreover, the maximization in \cref{e:pdr-bound} can be restricted as explained in \cref{th:sharpness}.
\end{corollary}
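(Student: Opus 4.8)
The plan is to deduce \cref{th:sharpness-corollary} from \cref{th:sharpness} by convexifying $f_0$ in its gradient slot. Write $f_0^{**}(x,\cdot)$ for the convex envelope (double Legendre transform) of $z \mapsto f_0(x,z)$, and let $\optimalValue(f_0^{**})$, $\occBound(f_0^{**})$ and $\pdrBound(f_0^{**})$ denote the three quantities from \cref{e:convex-integrands,e:occ-measures-minimization,e:pdr-bound} for the variational problem in which $f_0$ is replaced by $f_0^{**}$ and $f_1$ is left untouched. The first step is to check that this convexified problem satisfies the hypotheses of \cref{th:sharpness}. Convexity of $z \mapsto f_0^{**}(x,z)$ is automatic; the upper bound $f_0^{**} \le f_0 \le c_3(\abs{z}^p+1)$ is inherited; and the lower bound $f_0^{**}(x,z) \ge c_1\abs{z}^p - c_2$ holds because $z \mapsto c_1\abs{z}^p - c_2$ is convex (here $p>1$) and lies below $f_0(x,\cdot)$ for every $x$, hence below its convex envelope. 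The only nontrivial point is that $f_0^{**}$ is jointly continuous on $\Omega \times \R^{m\times n}$, which follows from the continuity and two-sided $p$-growth of $f_0$ by a standard argument: Carathéodory's theorem expresses $f_0^{**}(x,z)$ as an infimum of averages $\sum_i \lambda_i f_0(x,z_i)$ over at most $mn+1$ points, and the coercivity bound confines the competing points $z_i$ to a compact set depending continuously on $z$.

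Granting this, \cref{th:sharpness} gives $\optimalValue(f_0^{**}) = \occBound(f_0^{**}) = \pdrBound(f_0^{**})$, and moreover the supremum defining $\pdrBound(f_0^{**})$ is unchanged if one restricts to dual certificates of the special form $(\phi,\eta,h,\ell)=(\sigma(x)y,0,h,0)$. The next step is to compare these numbers with their counterparts $\optimalValue$, $\occBound$, $\pdrBound$ for the original problem. Because $f_0^{**} \le f_0$ pointwise, any $(\phi,\eta,h,\ell)$ for which $F^{\phi,\eta,h}$ is nonnegative on $\Gamma$ when computed with $f_0^{**}$ a fortiori makes $F^{\phi,\eta,h}$ nonnegative on $\Gamma$ when computed with $f_0$; the inequality on $\Lambda$ and the objective $\int_\Omega h\dVolume + \int_{\partial\Omega}\ell\dSurf$ do not involve $f_0$ at all. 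Hence the pointwise-dual feasible set for the original problem contains that of the convexified one, so $\pdrBound \ge \pdrBound(f_0^{**})$ and, for the same reason, the restriction to certificates of the form $(\sigma(x)y,0,h,0)$ does not lower $\pdrBound$ either. Chaining this with the weak-duality bound $\occBound \ge \pdrBound$ of \cref{th:weak-duality} and the relaxation bound $\optimalValue \ge \occBound$ of \cref{e:occ-measures-minimization} yields
\begin{equation*}
    \optimalValue \;\ge\; \occBound \;\ge\; \pdrBound \;\ge\; \pdrBound(f_0^{**}) \;=\; \optimalValue(f_0^{**}),
\end{equation*}
so the whole chain collapses to equalities — and the claimed restriction on the maximizing certificates transfers verbatim — as soon as one knows that $\optimalValue = \optimalValue(f_0^{**})$.

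The remaining identity $\optimalValue = \optimalValue(f_0^{**})$ is exactly where the hypothesis $n=1$ or $m=1$ enters, and it is the only genuinely substantial step. It is an instance of the classical relaxation theorem for multiple integrals (see, e.g., \cite{Dacorogna2008}): on the bounded Lipschitz domain $\Omega$, the relaxation (weak-$W^{1,p}$ lower semicontinuous envelope) of $u \mapsto \int_\Omega f_0(x,\nabla u) + f_1(x,u)\dVolume$ over $W_0^{1,p}(\Omega;\R^m)$ equals $u \mapsto \int_\Omega f_0^{\rm qc}(x,\nabla u) + f_1(x,u)\dVolume$, where $f_0^{\rm qc}$ is the quasiconvex envelope and the $f_1$-term passes through unchanged because $u \mapsto \int_\Omega f_1(x,u)\dVolume$ is weakly continuous on $W^{1,p}$ (Rellich compactness together with the continuity and $p$-growth of $f_1$ — convexity of $f_1$ in $y$, assumed in the corollary only for the sake of \cref{th:sharpness}, is not needed here). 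Since quasiconvexity reduces to convexity when $n=1$ or $m=1$, one has $f_0^{\rm qc} = f_0^{**}$, and because a functional and its relaxation share the same infimum, $\optimalValue = \optimalValue(f_0^{**})$. I expect the only delicate bookkeeping to be confirming that the growth hypotheses \ref{ass:sharpness-growth} are precisely strong enough both to invoke this relaxation theorem (coercivity plus $p$-growth) and to transfer admissibility to $f_0^{**}$; the rest is a soft chase of inequalities. The same argument covers $f_1(x,y)=F(x)\cdot y$ for smooth $F$, since such $f_1$ is convex in $y$ and the weak continuity of its integral is immediate.
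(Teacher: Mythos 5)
Your proposal is correct and follows essentially the same route as the paper's proof: convexify $f_0$ to $f_0^{**}$, invoke the classical relaxation theorem (quasiconvexity reducing to convexity when $n=1$ or $m=1$) to get $\optimalValue$ unchanged, verify that $f_0^{**}$ satisfies the hypotheses of \cref{th:sharpness}, and transfer the dual certificates back to the original integrand via $f_0 \geq f_0^{**}$ (which the paper phrases through the identity $f_0^{***}=f_0^*$). Your direct convex-envelope verification of the coercivity bound for $f_0^{**}$ (and your explicit continuity check) replaces the paper's Legendre-transform computation, but this is a cosmetic difference in bookkeeping rather than a different argument.
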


\begin{remark}
	The result extends to functions $f_1$ that do not satisfy condition~\ref{ass:sharpness-growth}, but are linear in $y$ as explained in \cref{remark:linear-case}. This is the case for the one-dimensional example in \S5.3 of \cite{Chernyavskiy2020}, where numerical lower bounds on $\pdrBound$ computed using semidefinite programming agree with $\optimalValue$ to high accuracy.
\end{remark}
\begin{remark}
	The convexification argument used to prove \cref{th:sharpness-corollary} cannot be applied to remove the convexity assumption on $f_1$ without changing the minimum in general. A simple counterexample is the one-dimensional scalar problem
	\begin{equation}
		\optimalValue := \inf_{u(\pm1)=0} \int_{-1}^1 \abs{\frac{{\rm d}u}{{\rm d}x}}^2 + \abs{u-1} \abs{u+1} \, \dVolume,
	\end{equation}
	where $f_0(x,z)=z^2$ is convex but $f_1(x,y) = \abs{y-1} \abs{y+1}$ is not. It is clear that $\optimalValue$ is strictly positive.
	On the other hand, since
	\begin{equation}
		f_1^{**}(x,y) = \begin{cases}
			\abs{y-1} \abs{y+1} &\text{if } \abs{y}\geq 1\\
			0 &\text{otherwise}
		\end{cases}
	\end{equation}
	the minimum of the convexified functional $\int_{-1}^1 \abs{\frac{{\rm d}u}{{\rm d}x}}^2 + f_1^{**}(x,u) \, \dVolume$ is zero.
\end{remark}

\begin{proof}[Proof of \cref{th:sharpness-corollary}]
	It is known (see, e.g.,~\cite[Theorem~9.8]{Dacorogna2008}) that when $n=1$ or $m=1$ we can replace $f_0$ with its convexification $f_0^{**}$ without changing $\optimalValue$, i.e.
	\begin{equation*}
		\mathcal{F}^* = \inf_{ u \in W_0^{1,p}(\Omega; \R^m) } \int_{\Omega} f_0^{**}(x,\nabla u) + f_1(x,u) \, \dVolume.
	\end{equation*}
	To prove the corollary we apply \cref{th:sharpness} and the standard identity $f_0^{***} = f_0^*$. For this, we must check that $f_0^{**}$ satisfies the same growth and coercivity conditions as $f_0$, possibly with different constants $c_1$, $c_2$ and $c_3$. This follows from the definition of the Legendre transform and the growth and coercivity conditions on $f_0$. Indeed, by the coercivity of $f_0$,
	\begin{multline}
	f_0^*(x,z^*)
	:= \sup_{z \in \mathbb{R}^n} \{z^* \cdot z - f_0(x,z)\}
	\leq \sup_{z \in \mathbb{R}^n} \{z^* \cdot z - c_1\abs{z}^p\} + c_2
	\\
	= \frac1q \left( \frac{1}{ p c_1}\right)^{\frac{1}{p-1}} \abs{z^*}^q + c_2
	\leq c_4 \left( \abs{z^*}^q + 1\right),
	\end{multline}
	where $q=p/(p-1)$ and $c_4$ is the largest of the two constants appearing in the penultimate line. Similarly, the growth condition on $f_0$ gives
	\begin{multline}
	f_0^*(x,z^*)
	:= \sup_{z \in \mathbb{R}^n} \{z^* \cdot z - f_0(x,z)\}
	\\\geq \sup_{z \in \mathbb{R}^n} \{z^* \cdot z - c_3\abs{z}^p\} - c_3
	= \frac1q \left( \frac{1}{ p c_3}\right)^{\frac{1}{p-1}} \abs{z^*}^q + c_3.
	\end{multline}
	Thus, $f_0^*$ satisfies coercivity and growth conditions similar to those of $f_0$, but with different constants and the conjugate exponent $q$. By the same arguments one concludes that the double Legendre transform $f_0^{**} = (f_0^*)^*$ satisfies growth and coercivity conditions with exponent $p$, as desired.
\end{proof}

The rest of this section proves \cref{th:sharpness}.
The key is to recognize that pointwise dual relaxations solve a dual variational problem that can be obtained directly using the Legendre transform (cf. \cref{lemma:sharpness}).

%%%%%%%%%%%%%%%%%%%%%%%%%%%%%%%%%%%%%%%%%%%%%%%%%%%%%
\subsection{Three technical lemmas}\label{ss:sharpness-lemmas}
First, we prove that certain integral functionals on  $L^q(\Omega;\R^{m\times n})$ and $L^q(\Omega;\R^{m})$ defined using $f_0^*$ and $f_1^*$ are continuous. Here, $q$ is the H\"older conjugate of the exponent $p$ from \cref{th:sharpness}, meaning that $1/p+1/q=1$.

\begin{lemma}\label{lemma:continuity}
    Under the growth and coercivity conditions~\ref{ass:sharpness-growth}, the functions $\sigma^* \mapsto \int_\Omega f_0^*(x,\sigma^*(x)) \dVolume$ and $\rho^* \mapsto \int_\Omega f_1^*(x,\rho^*(x)) \dVolume$ are finite and strongly continuous from $L^q(\Omega;\R^{m\times n})$ and $L^q(\Omega;\R^{m})$ into $\R$.
\end{lemma}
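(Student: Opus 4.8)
The plan is to recognize the two functionals as Nemytskii (superposition) operators composed with integration, and to invoke the classical continuity theorem for Nemytskii operators between Lebesgue spaces. The only real work is to verify that the integrands $f_0^*$ and $f_1^*$ satisfy the hypotheses of that theorem, namely that they are Carath\'eodory functions obeying a growth bound of the form $|f_0^*(x,\sigma^*)| \lesssim 1 + |\sigma^*|^q$ (and similarly for $f_1^*$), with $q$ the H\"older conjugate of $p$. Once this is in place, the standard result (see, e.g., \cite{Dacorogna2008}) gives that $\sigma^* \mapsto f_0^*(\cdot,\sigma^*(\cdot))$ maps $L^q(\Omega;\R^{m\times n})$ continuously into $L^1(\Omega)$, and composing with the (continuous, linear) integration functional $L^1(\Omega)\to\R$ yields the claim; the same argument applies verbatim to $f_1^*$.

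First I would establish the growth bound on $f_0^*$. The upper bound is obtained exactly as in the proof of \cref{th:sharpness-corollary}: from the coercivity $f_0(x,z) \geq c_1|z|^p - c_2$ one gets
\begin{equation*}
    f_0^*(x,z^*) = \sup_{z}\{z^*\cdot z - f_0(x,z)\} \leq \sup_z\{z^*\cdot z - c_1|z|^p\} + c_2 = \tfrac1q (pc_1)^{-\frac{1}{p-1}}|z^*|^q + c_2,
\end{equation*}
so $f_0^*(x,z^*) \leq c_4(1+|z^*|^q)$ uniformly in $x$. For the lower bound, the growth hypothesis $f_0(x,z) \leq c_3(|z|^p+1)$ gives $f_0^*(x,z^*) \geq \sup_z\{z^*\cdot z - c_3|z|^p\} - c_3 = \tfrac1q(pc_3)^{-\frac{1}{p-1}}|z^*|^q - c_3 \geq -c_3$; in particular $f_0^*$ is bounded below, so $|f_0^*(x,z^*)| \lesssim 1 + |z^*|^q$. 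This already shows finiteness of the functional: $\int_\Omega f_0^*(x,\sigma^*(x))\dVolume \leq c_4(|\Omega| + \|\sigma^*\|_{L^q}^q) < \infty$, and it is bounded below by $-c_3|\Omega|$. The same computations with $f_1$ in place of $f_0$ give $|f_1^*(x,\rho^*)| \lesssim 1 + |\rho^*|^q$.

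Next I would check the Carath\'eodory property: measurability in $x$ and continuity in $z^*$. Continuity of $z^* \mapsto f_0^*(x,z^*)$ is automatic since a finite convex function on $\R^{m\times n}$ is continuous. For joint measurability, note $f_0$ is continuous in $(x,z)$ by hypothesis, hence $f_0^*$ is a supremum over $z$ ranging in a separable set of functions jointly continuous in $(x,z^*)$, so $f_0^*$ is a Carath\'eodory function (indeed lower semicontinuous in $(x,z^*)$, and continuous in $z^*$). With the Carath\'eodory property and the $q$-growth bound verified for both integrands, the Nemytskii continuity theorem applies and the proof is complete. I do not expect a genuine obstacle here — the one point demanding a little care is that the growth bound must be \emph{uniform in $x$}, which is why the hypotheses~\ref{ass:sharpness-growth} are stated with $x$-independent constants $c_1,c_2,c_3$; this uniformity is exactly what makes the Nemytskii theorem applicable on the bounded domain $\Omega$.
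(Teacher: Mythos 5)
Your proposal is correct and follows essentially the same route as the paper: both derive the uniform bound $\abs{f_0^*(x,z^*)}\lesssim 1+\abs{z^*}^q$ from assumption~\ref{ass:sharpness-growth} via Young's inequality (your explicit Legendre computation), conclude finiteness, and then obtain strong continuity of the integral functional from continuity of the conjugate in its second argument together with the $q$-growth bound---the Nemytskii/Krasnoselskii step you invoke explicitly and the paper leaves implicit. The only cosmetic differences are that the paper establishes continuity of $z^*\mapsto f_0^*(x,z^*)$ by citing a theorem of Rockafellar, whereas you use the elementary fact that a finite convex function on $\R^{m\times n}$ is continuous, and you additionally spell out the Carath\'eodory measurability in $x$, which the paper glosses over.
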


\begin{proof}
We prove the results only for
$\sigma^* \mapsto \int_\Omega f_0^*(x,\sigma^*(x)) \dVolume$
since the arguments for $\rho^* \mapsto \int_\Omega f_1^*(x,\rho^*(x)) \dVolume$ are identical. To see that $\int_\Omega f_0^*(x,\sigma^*(x)) \dVolume < +\infty$ for all $\sigma \in L^q(\Omega;\R^{m \times n})$, observe that straightforward estimates using assumption~\ref{ass:sharpness-growth} and Young's inequality yield
$\abs{f_0^*(x,z^*)}\lesssim 1 + \abs{z^*}^q$ for all $x \in \Omega$. To establish strong continuity, instead, it suffices to check that $z^* \mapsto f_0^*(x,z^*)$ is continuous.
This can be done with the help of~\cite[Theorem~7.A]{Rockafellar1966}, which guarantees continuity if, for every $z^*$ and any $\alpha<f^*(x,z^*)$, the set $\{ z \in \R^{m \times n}: f_0(x,z) +\alpha \leq z \cdot z^* \}$  is bounded.  This is an easy consequence of the lower bound on $f_0$ in assumption~\ref{ass:sharpness-growth}, combined with the H\"older inequality $z\cdot z^* \leq |z|^p/p + |z^*|^q/q$ with  $p>1$ and $1/p+1/q=1$.
\end{proof}

Next, we use classical duality theory~\cite{EkelandTemam1999} to express $\optimalValue$ as the optimal value of a constrained maximization problem involving the Legendre transforms $f_0^*$ and $f_1^*$.
\begin{lemma}\label{lemma:sharpness}
    Let $q = \frac{p}{p-1}$. Under the assumptions of \cref{th:sharpness},
    \begin{equation}\label{e:conjugate-dual-problem}
        \optimalValue = \sup_{ \substack{ \sigma \in L^{q}(\Omega; \R^{m\times n}) \\ \rho \in L^{q}(\Omega; \R^{m}) \\ \nabla \cdot \sigma = \rho} }
    \int_\Omega -f_0^*(x, \sigma) - f_1^*(x,\rho) \, \dVolume.
    \end{equation}
\end{lemma}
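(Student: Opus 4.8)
\medskip

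\textbf{Overall approach.} The plan is to apply the classical Fenchel--Rockafellar duality framework as presented in Ekeland--Temam~\cite{EkelandTemam1999} to the convex minimization problem \cref{e:convex-integrands}. The primal problem is $\inf_{u \in W_0^{1,p}(\Omega;\R^m)} [ \Phi(\Lambda u) + \Psi(u) ]$, where we take the linear operator $\Lambda u = (\nabla u, u)$ mapping $W_0^{1,p}$ into $L^p(\Omega;\R^{m\times n}) \times L^p(\Omega;\R^m)$, and set $\Phi(z,y) = \int_\Omega f_0(x,z(x)) \dVolume + \int_\Omega f_1(x,y(x)) \dVolume$ on the target space, with $\Psi \equiv 0$ on $W_0^{1,p}$. (Alternatively one absorbs $f_1$ into $\Psi(u) = \int_\Omega f_1(x,u) \dVolume$ and uses only $\Lambda u = \nabla u$; both organizations work, and I would pick whichever makes the adjoint cleanest.) The growth and coercivity bounds \ref{ass:sharpness-growth} guarantee that $\Phi$ is finite, convex, and continuous on the relevant $L^p$ spaces (this is the standard Krasnoselskii-type continuity of Nemytskii operators, and is also essentially \cref{lemma:continuity} applied on the conjugate side), so the qualification hypothesis for strong duality---existence of a point where $\Phi$ is finite and continuous and $\Psi$ is finite---holds trivially at $u=0$. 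Hence there is no duality gap and the dual supremum is attained, giving
\[
\optimalValue = \sup_{(\sigma,\rho)} \left\{ -\Phi^*(\sigma,\rho) - \Psi^*(-\Lambda^* (\sigma,\rho)) \right\}.
\]

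\textbf{Key steps.} First I would identify the adjoint $\Lambda^*$. With $\Lambda u = (\nabla u, u)$ and the duality pairing between $L^p$ and $L^q$, integration by parts (using $u \in W_0^{1,p}$, so no boundary terms) gives $\langle \Lambda^*(\sigma,\rho), u\rangle = \int_\Omega (-\nabla\cdot\sigma + \rho)\cdot u \dVolume$ in the distributional sense; since $u$ ranges over (a dense subset of) $W_0^{1,p}$, the condition $\Lambda^*(\sigma,\rho) = 0$ in $W^{-1,q}$ is exactly $\nabla\cdot\sigma = \rho$, the constraint appearing in \cref{e:conjugate-dual-problem}. Second, because $\Psi \equiv 0$, its conjugate is $0$ at the origin and $+\infty$ elsewhere, so $-\Psi^*(-\Lambda^*(\sigma,\rho))$ enforces precisely the constraint $\nabla\cdot\sigma=\rho$ and contributes nothing otherwise. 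Third, I would compute $\Phi^*$: since $\Phi$ is an integral functional with integrand convex in the vector variable and satisfying the two-sided $p$-growth bounds, the standard interchange-of-conjugation-and-integration theorem (e.g.\ Rockafellar, or \cite[Ch.~IX]{EkelandTemam1999}) gives $\Phi^*(\sigma,\rho) = \int_\Omega f_0^*(x,\sigma(x)) \dVolume + \int_\Omega f_1^*(x,\rho(x)) \dVolume$, finite on all of $L^q$ by the estimate $|f_0^*(x,z^*)| \lesssim 1 + |z^*|^q$ (and likewise for $f_1^*$) noted in the proof of \cref{lemma:continuity}. Substituting these three computations into the Fenchel--Rockafellar identity yields exactly \cref{e:conjugate-dual-problem}, with the supremum attained.

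\textbf{Main obstacle.} The routine parts are the adjoint computation and the conjugate of the zero functional; the part demanding the most care is verifying that the hypotheses of the strong-duality theorem are genuinely met in this non-reflexive-looking but in fact perfectly fine $L^p$--$W_0^{1,p}$ setting---in particular that $\Phi$ is continuous (not merely lower semicontinuous) at some $\Lambda u$, which is where the upper growth bound $f_i(x,\cdot)\le c_3(|\cdot|^p+1)$ in \ref{ass:sharpness-growth} is essential, and that the interchange $\left(\int f\right)^* = \int f^*$ is licensed, which again uses both the coercivity (so that $f_0^*$ is finite and the integrand is normal) and the growth bound. A secondary subtlety is that one should confirm $\optimalValue > -\infty$ and that the primal infimum need not be attained in $W_0^{1,p}$ for the argument to go through---it does not, since duality only requires the primal value to be finite, which follows from the lower coercivity bound together with Poincaré's inequality on $W_0^{1,p}(\Omega;\R^m)$. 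I would also remark that \cref{remark:linear-case}'s variant (with $f_1(x,y)=F(x)\cdot y$) is handled identically, since then $\Psi(u) = \int_\Omega F\cdot u \dVolume$ is linear and continuous on $W_0^{1,p}$, and $\Psi^*$ is the indicator of $\{F\}$, shifting the constraint to $\nabla\cdot\sigma = F$ with no $\rho$ term and no $f_1^*$ contribution.
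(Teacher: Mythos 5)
Your proposal is correct, but it applies the duality machinery in the opposite orientation from the paper, and the two arrangements require different supporting facts. The paper treats the $(\sigma,\rho)$-problem as the primal: it defines $\Phi(\sigma,\rho)=\int_\Omega f_0^*(x,\sigma)+f_1^*(x,\rho)\dVolume$ and $\Psi$ as the indicator of $\{\nabla\cdot\sigma=\rho\}$ on $L^q\times L^q$, invokes Fenchel--Rockafellar there (the qualification being continuity of $\Phi$ on $L^q$, which is exactly \cref{lemma:continuity}, plus nonemptiness of the constraint set via solvability of $\nabla\cdot\sigma=\rho$), and then identifies the resulting dual with $\optimalValue$ by computing $\Phi^*$ through \cite[Proposition~IX.2.1]{EkelandTemam1999} together with $f_i^{**}=f_i$, and $\Psi^*$ through a Helmholtz-decomposition argument that forces $\sigma^*=-\nabla u$, $\rho^*=-u$ with $u\in W_0^{1,p}$. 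You instead take the original variational problem over $W_0^{1,p}$ as the primal, with $\Lambda u=(\nabla u,u)$, check the qualification on the $L^p$ side (continuity of $\int f_0+f_1$ from the upper bound in \ref{ass:sharpness-growth}), compute $\Lambda^*$ by integration by parts so that $\Psi^*$ (conjugate of the zero functional) enforces $\nabla\cdot\sigma=\rho$ distributionally, and pass conjugation under the integral for $f_0,f_1$ rather than for their conjugates. Your route buys simplicity: it avoids the Helmholtz decomposition and the citation for solvability of the divergence equation, needs neither \cref{lemma:continuity} nor $f_i^{**}=f_i$, and yields attainment of the supremum in \cref{e:conjugate-dual-problem} as a bonus; the delicate points you flag (continuity of the primal integral functional, legitimacy of $(\int f)^*=\int f^*$, finiteness of $\optimalValue$) are exactly the right ones and are all covered by \ref{ass:sharpness-growth} and the continuity of $f_0,f_1$. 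The paper's orientation, by contrast, keeps all analysis on the $L^q$ side where \cref{lemma:continuity} is already available and makes explicit how the dual variables encode a Sobolev function, which it then reuses in the proof of \cref{th:sharpness}. Your side remark that one could instead absorb $f_1$ into $\Psi(u)=\int_\Omega f_1(x,u)\dVolume$ with $\Lambda u=\nabla u$ is the only place to be careful: computing that $\Psi^*$ on $W^{-1,q}$ is not simply $\int f_1^*$, so the organization you actually develop, with $\Lambda u=(\nabla u,u)$, is the right one to keep.
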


\begin{proof}
Define the convex functions
\begin{align*}
    \Phi(\sigma,\rho) &:= \int_\Omega f_0^*(x,\sigma) + f_1^*(x,\rho) \dVolume,
    &\text{and} &&
    \Psi(\sigma,\rho) &:= \begin{cases}
     0 &\text{if } \nabla \cdot \sigma = \rho,\\
     +\infty &\text{otherwise},
    \end{cases}
\end{align*}
which are convex from $L^q(\Omega;\R^{m \times n})  \times L^q(\Omega;\R^{m})$ into $(-\infty,+\infty]$. Let $D(\Phi)$ and $D(\Psi)$ be the sets in $L^q(\Omega;\R^{m \times n})  \times L^q(\Omega;\R^{m})$ where they take on finite values.
The right-hand side of~\cref{e:conjugate-dual-problem} is equivalent to maximizing $-\Phi(\sigma) - \Psi(\sigma)$, and the identity
\begin{equation}\label{e:fenchel-rockafellar}
    \sup_{ \substack{\sigma \in L^q(\Omega;\R^{m \times n}) \\ \rho \in L^q(\Omega;\R^{m})} } \{-\Phi(\sigma,\rho) - \Psi(\sigma,\rho)\}
    = \inf_{ \substack{\sigma^* \in L^p(\Omega;\R^{m \times n}) \\ \rho^* \in L^p(\Omega;\R^{m})} } \{\Phi^*(-\sigma^*,-\rho^*) + \Psi^*(\sigma^*,\rho^*)\}
\end{equation}
holds if there exists $(\sigma_0,\rho_0) \in D(\Phi) \cap D(\Psi)$ at which $\Phi$ is continuous in the strong topology of $L^q(\Omega;\R^{m \times n}) \times L^q(\Omega;\R^{m})$ \cite[Theorem~1.12]{Brezis2011}.
This is true because $\Phi$ is continuous on the entire space $L^q(\Omega;\R^{m \times n}) \times L^q(\Omega;\R^{m})$ by \cref{lemma:continuity}, while $D(\Psi)$ is nonempty because the equation $\nabla \cdot \sigma= \rho$ admits at least one solution
$\sigma \in L^q(\Omega;\R^{m \times n})$ for every $\rho \in L^q(\Omega;\R^{m})$~\cite{Russ2013}.
The lemma is therefore proven if we can show that the right-hand side of~\cref{e:fenchel-rockafellar} is equal to $\optimalValue$, i.e.
\begin{equation}\label{e:composite-conjugation}
    \inf_{ \substack{\sigma^* \in L^p(\Omega;\R^{m \times n}) \\ \rho^* \in L^p(\Omega;\R^{m})} } \{\Phi^*(-\sigma^*,-\rho^*) + \Psi^*(\sigma^*,\rho^*)\}= \inf_{u \in W_0^{1,p}}  \int_\Omega f_0(x,\nabla u) + f_1(x,u) \dVolume.
\end{equation}

To establish this identity, observe that
\begin{align}\label{e:Phi-conjugate}
    \Phi^*(-\sigma^*,-\rho^*)
    &=
    \sup_{ \substack{\sigma \in L^q(\Omega;\R^{m \times n}) \\ \rho \in L^q(\Omega;\R^{m})} }
    \int_{\Omega} -\sigma^* \cdot \sigma - \rho^* \cdot \rho - f_{0}^*(x, \sigma) - f_{1}^*(x, \rho) \dVolume
    \\ \nonumber
    &= \int_\Omega f_0^{**}(x,-\sigma^*) + f_1^{**}(x,-\rho^*) \dVolume
    \\ \nonumber
    &= \int_\Omega f_0(x,-\sigma^*) + f_1(x,-\rho^*) \dVolume.
\end{align}
The second equality follows from~\cite[Proposition~IX.2.1]{EkelandTemam1999} and the last one is a consequence of assumption~\ref{ass:sharpness-convexity}, since the double Legendre transform of a convex function is the function itself.
To calculate the Legendre transform of $\Psi$, instead, observe via the Helmholtz decomposition that any $\sigma$ satisfying $\nabla \cdot \sigma = \rho$ can be written as $\sigma = \nabla v^\rho + \xi$, where $\xi$ is divergence-free and the components of $v^\rho=(v^\rho_1,\ldots,v^\rho_m) \in \smash{W^{1,p}_0}$ satisfy the Poisson equation $\Delta v^\rho_i = \rho_i$. Thus,
\begin{align*}
    \Psi^*(\sigma^*,\rho^*)
    &= \sup_{ \substack{\sigma \in L^{q}(\Omega;\R^{m \times n}) \\ \rho \in L^{q}(\Omega;\R^{m}) \\ \nabla \cdot \sigma = \rho } }
    \int_{\Omega} \sigma^* \cdot \sigma + \rho^* \cdot \rho \dVolume
    \\
    &= \sup_{ \rho \in L^{q}(\Omega;\R^{m}) }
    \sup_{ \substack{\xi \in L^{q}(\Omega;\R^{m \times n}) \\ \nabla \cdot \xi = 0 } }
    \int_{\Omega} \sigma^* \cdot \nabla v^\rho + \sigma^* \cdot \xi + \rho^* \cdot \rho \dVolume.
\end{align*}
The inner supremum is unbounded unless $\int_\Omega \sigma^* \cdot \xi \dVolume=0$, which requires $\sigma^* = -\nabla u$ for some $u \in \smash{W^{1,p}_0}$. In this case, integration by parts and the identity $\Delta v^\rho_i = \rho_i$ yield
\begin{multline*}
    \Psi^*(-\nabla u,\rho^*)
    =\sup_{ \rho \in L^{q}(\Omega;\R^{m}) }
    \int_{\Omega}  u \cdot \Delta v^\rho + \rho^* \cdot \rho \; \dVolume
    \\
    =\sup_{ \rho \in L^{q}(\Omega;\R^{m}) }
    \int_{\Omega} (u + \rho^*) \cdot \rho \; \dVolume
    = \begin{cases}
    0 &\text{if } \rho^* = -u,\\
    +\infty &\text{otherwise}.
    \end{cases}
\end{multline*}
Thus,
\begin{equation}\label{e:Psi-conjugate}
   \Psi^*(\sigma^*,\rho^*)
   = \begin{cases}
   0 &\text{if } \exists u\in W_0^{1,p}:\;\sigma^*=-\nabla u, \, \rho^* = -u,\\
   +\infty &\text{otherwise}.
   \end{cases}
\end{equation}
Combining~\cref{e:Phi-conjugate} and~\cref{e:Psi-conjugate} yields~\cref{e:composite-conjugation}, as required.
\end{proof}

Finally, we show that feasible functions for the maximization problem~\cref{e:conjugate-dual-problem} can be approximated by feasible functions that are smooth on $\overline{\Omega}$. We believe the result is standard, but give a proof for completeness.

\begin{lemma}\label{lemma:smoothness}
    Suppose $\sigma \in L^{q}(\Omega;\R^{m \times n})$ and $\rho \in L^{q}(\Omega;\R^{m})$ satisfy
	$\nabla \cdot \sigma = \rho$.
	There exists sequences
	$\{\sigma_k\}_{k \in \mathbb{N}} \subset C^{\infty}(\overline{\Omega};\R^{m \times n})$
	and
	$\{\rho_k\}_{k \in \mathbb{N}} \subset C^{\infty}(\overline{\Omega};\R^{m})$
	with
	$\nabla \cdot \sigma_k = \rho_k$
	such that
	$\| \sigma_k - \sigma \|_q \to 0$
	and
	$\|\rho_k - \rho\|_q \to 0$ as $k \to \infty$.
\end{lemma}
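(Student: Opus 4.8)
The plan is to reduce the statement to a standard density theorem for vector fields with integrable divergence, and then to record the short argument that deduces the lemma from it.

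First I would observe that the constraint $\nabla\cdot\sigma=\rho$ decouples over the $m$ rows of $\sigma$: writing $\sigma=(\sigma^{(1)},\dots,\sigma^{(m)})$ with $\sigma^{(i)}\in L^q(\Omega;\R^n)$ and $\rho=(\rho_1,\dots,\rho_m)$, the hypothesis says exactly that $\nabla\cdot\sigma^{(i)}=\rho_i$ for each $i$, so it suffices to treat the scalar case $m=1$ and stack the rows back together at the end. In that case, since $\rho\in L^q(\Omega)$, the hypothesis is precisely that $\sigma$ belongs to the Banach space $E^q(\Omega):=\{v\in L^q(\Omega;\R^n):\nabla\cdot v\in L^q(\Omega)\}$ equipped with the graph norm $\|v\|_{E^q}:=\|v\|_q+\|\nabla\cdot v\|_q$, where $\nabla\cdot v$ is understood distributionally. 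The key claim is then that $C^\infty(\overline\Omega;\R^n)$ is dense in $E^q(\Omega)$. Granting this, the lemma is immediate: choose $\sigma_k\in C^\infty(\overline\Omega;\R^n)$ with $\|\sigma_k-\sigma\|_{E^q}\to 0$ and set $\rho_k:=\nabla\cdot\sigma_k\in C^\infty(\overline\Omega)$; then $\nabla\cdot\sigma_k=\rho_k$ by construction, $\|\sigma_k-\sigma\|_q\to 0$, and $\|\rho_k-\rho\|_q=\|\nabla\cdot(\sigma_k-\sigma)\|_q\to 0$.

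It then remains to prove, or rather — since it is classical — to sketch, the density of $C^\infty(\overline\Omega;\R^n)$ in $E^q(\Omega)$. Because $\Omega$ is a bounded Lipschitz domain, I would cover $\partial\Omega$ by finitely many bounded open sets $U_1,\dots,U_N$ in each of which, after a rigid motion, $\Omega\cap U_j=\{x_n<\gamma_j(x')\}\cap U_j$ for a Lipschitz function $\gamma_j$, add an open set $U_0\Subset\Omega$, and fix a smooth partition of unity $\{\chi_j\}_{j=0}^N$ subordinate to $\{U_j\}_{j=0}^N$. From $\nabla\cdot(\chi_j\sigma)=\chi_j\,\nabla\cdot\sigma+\sigma\cdot\nabla\chi_j\in L^q(\Omega)$ one gets $\chi_j\sigma\in E^q(\Omega)$, and $\sigma=\sum_{j=0}^N\chi_j\sigma$, so it is enough to approximate each $\chi_j\sigma$ in $E^q(\Omega)$ by functions in $C^\infty(\overline\Omega;\R^n)$. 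The interior term $\chi_0\sigma$ is handled by mollification. For a boundary term $\chi_j\sigma$ with $j\ge1$, extend it by zero to $\R^n$, translate it a small distance $t>0$ into $\Omega$ in the graph coordinates (replacing $\chi_j\sigma$ by $x\mapsto(\chi_j\sigma)(x+te_n)$), which puts its support strictly inside $\Omega$ at distance $\gtrsim t$ from $\partial\Omega$, and then mollify at a scale $\delta\ll t$; the result lies in $C^\infty(\overline\Omega;\R^n)$, and because the translate already vanishes near $\partial\Omega$ no boundary contribution arises when differentiating, so its divergence equals the mollification of the translate of $\chi_j\,\nabla\cdot\sigma+\sigma\cdot\nabla\chi_j$. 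Letting $t\to0$ with $\delta=\delta(t)\to0$ suitably, and using continuity of translation and of mollification on $L^q$, these approximants converge in $L^q$ to $\chi_j\sigma$ and their divergences converge in $L^q$ to $\chi_j\,\nabla\cdot\sigma+\sigma\cdot\nabla\chi_j$. Summing over $j$ and using $\sum_j\chi_j\equiv1$ (hence $\sum_j\sigma\cdot\nabla\chi_j=0$) yields elements of $C^\infty(\overline\Omega;\R^n)$ converging to $\sigma$ in $E^q(\Omega)$; taking $t=1/k$ produces the required sequence.

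The genuinely technical point is the translate-and-mollify step near the Lipschitz boundary: it is the only place the geometry of $\Omega$ enters, and it must be arranged so that the approximants are smooth \emph{up to} $\overline\Omega$ while their divergences still converge. Everything else is routine bookkeeping. (For $q=2$ this density is the familiar statement that $C^\infty(\overline\Omega;\R^n)$ is dense in the space $H(\mathrm{div},\Omega)$ of $L^2$ vector fields with $L^2$ divergence; the same argument works verbatim for all $1<q<\infty$.)
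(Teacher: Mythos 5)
Your reduction of the lemma to the density of $C^\infty(\overline{\Omega};\R^n)$ in the graph space $E^q(\Omega)=\{v\in L^q:\nabla\cdot v\in L^q\}$ (row by row, then set $\rho_k=\nabla\cdot\sigma_k$) is correct, and it is a genuinely different route from the paper, which never localizes: there, $(\sigma,\rho)$ is extended to a larger ball $B\supset\overline{\Omega}$ by solving a $p$-Laplace problem in the annulus $B\setminus\Omega$ whose Neumann data match the normal trace $\sigma\cdot\hat n$ on $\partial\Omega$, so that the extended pair still satisfies $\nabla\cdot\overline{\sigma}=\overline{\rho}$ distributionally on $B$, and a single global mollification finishes the proof. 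However, the step you yourself flag as the technical heart of your sketch is wrong as stated. You extend $\chi_j\sigma$ by zero to $\R^n$ and then translate its support \emph{into} $\Omega$. The zero-extension does not lie in $E^q(\R^n)$: its distributional divergence is $(\chi_j\rho+\sigma\cdot\nabla\chi_j)\mathbf{1}_\Omega$ \emph{minus} a surface distribution on $\partial\Omega$ carrying the localized normal trace $\chi_j\,\sigma\cdot\hat n$, which is nonzero in general. Translating inward does not remove this singular part; it merely relocates it to a hypersurface strictly inside $\Omega$, and mollification at scale $\delta\ll t$ then yields an approximant whose divergence contains the mollified surface term, of size $\sim\delta^{-1/q'}$ in $L^q$. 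So your $\rho_k$ do not converge to $\chi_j\rho+\sigma\cdot\nabla\chi_j$; the claim that ``no boundary contribution arises when differentiating'' confuses vanishing near $\partial\Omega$ with the absence of a jump across the translated interface. A one-dimensional example already shows the failure: $\Omega=(0,1)$, $\sigma\equiv1$, $\rho=0$; the zero-extension has derivative $\delta_0-\delta_1$, the inward translate has derivative $\delta_{-t}-\delta_{1-t}$ with $\delta_{1-t}$ sitting inside $\Omega$, and mollifying gives a ``divergence'' whose $L^q(0,1)$ norm blows up as $\delta\to0$.

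The fix is the standard one, and it reverses your translation: with $\Omega\cap U_j=\{x_n<\gamma_j(x')\}\cap U_j$, do \emph{not} extend by zero; instead consider $x\mapsto(\chi_j\sigma)(x-te_n)$, which is defined on the enlarged set $\{x_n<\gamma_j(x')+t\}\supset\overline{\Omega}\cap U_j'$ (values are sampled from inside $\Omega$), and whose distributional divergence there is simply the translate of $\chi_j\rho+\sigma\cdot\nabla\chi_j$, with no surface term. Mollifying at scale $\delta\ll t$ then gives fields smooth up to $\overline{\Omega}\cap U_j'$ whose divergences do converge in $L^q$, and the rest of your bookkeeping (partition of unity, interior mollification, summation) goes through. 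Alternatively, you could simply invoke the classical density theorem for $H(\mathrm{div})$-type spaces, valid verbatim for $1<q<\infty$, in which case your reduction correctly yields the lemma; but the proof of that density theorem as you sketched it would fail.
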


\begin{proof}
	Let $B$ be a ball with $\Omega \subset B$ and ${\rm dist}(\Omega,\partial B) \geq 1$, so $B\setminus \Omega$ is a bounded Lipschitz domain with boundary $\partial \Omega \cup \partial B$.
	Let $p=q/(q-1)$ and let $u$ solve
	\begin{equation*}
		\min_{ \substack{ u \in W^{1,p}(B\setminus\Omega; \mathbb{R}^m) \\ u = 0 \text{ on } \partial B} }
		\int_{B \setminus \Omega} \frac{\abs{\nabla u}^p}{p}\dVolume
		- \int_{\partial\Omega} (\sigma \cdot \hat{n}) \cdot u \dSurf.
	\end{equation*}
	This is a convex problem, so $u$ satisfies the Euler--Lagrange equation
	\begin{equation*}
		\begin{cases}
			\nabla \cdot (\abs{\nabla u}^{p-2} \nabla u ) = 0 &\text{on } B\setminus\Omega,\\
			\abs{\nabla u}^{p-2} \nabla u \cdot \hat{n} = \sigma \cdot \hat{n} & \text{on } \partial\Omega,\\
			u = 0 &\text{on }\partial B.
		\end{cases}
	\end{equation*}
	We claim that
	\begin{equation*}
		\overline{\sigma} :=
		\begin{cases}
			\sigma &\text{if } x \in \Omega\\
			\abs{\nabla u}^{p-2} \nabla u  &\text{if } x \in B\setminus\Omega
		\end{cases}
		\qquad\text{and}\qquad
		\overline{\rho} :=
		\begin{cases}
			\rho &\text{if } x \in \Omega\\
			0  &\text{if } x \in B\setminus\Omega
		\end{cases}
	\end{equation*}
	extend $\sigma$ and $\rho$ to functions in $L^q(B;\mathbb{R}^{m\times n})$ and $L^q(B; \mathbb{R}^m)$, respectively, and satisfy $\nabla \cdot \overline{\sigma} = \overline{\rho}$ in the sense of distribution.
	Indeed, it is clear that $\overline{\rho} \in L^q(B; \mathbb{R}^m)$, while $\overline{\sigma} \in L^q(B;\mathbb{R}^{m\times n})$ because $q=p/(p-1)$ and, consequently,
	\begin{equation*}
		\int_{B} \abs{\overline{\sigma}}^q \dVolume
		= \int_\Omega \abs{\sigma}^q \dVolume
		+ \int_{B\setminus\Omega} \abs{\nabla u}^{(p-1)q} \dVolume
		= \int_\Omega \abs{\sigma}^q \dVolume
		+ \int_{B\setminus\Omega} \abs{\nabla u}^{p} \dVolume
		<\infty.
	\end{equation*}
	The identity $\nabla \cdot \overline{\sigma} = \overline{\rho}$ is verified by a direct calculation: given any smooth and compactly supported test function $\varphi: B \to \mathbb{R}^m$,
	\begin{multline*}
		\int_B (\nabla \cdot \overline{\sigma})\cdot \varphi \dVolume
		= -\int_B \overline{\sigma} \cdot \nabla \varphi \dVolume
		= -\int_\Omega \sigma \cdot \nabla \varphi \dVolume
		   -\int_{B\setminus\Omega} \abs{\nabla u}^{p-2}\nabla u \cdot \nabla \varphi \dVolume
		\\
		=
		\int_\Omega \underbrace{(\nabla \cdot \sigma)}_{=\rho} \cdot \varphi \dVolume
		+ \int_{B\setminus\Omega} \underbrace{[\nabla \cdot (\abs{\nabla u}^{p-2}\nabla u)]}_{=0} \cdot \varphi \dVolume
		= \int_B \overline{\rho} \cdot \varphi \dVolume,
	\end{multline*}
	where in the second line we used the boundary conditions on $u$.
	Standard mollification arguments then yield a sequence $\{{\sigma}_k,{\rho}_k\}$ of functions that are smooth on $B$ (hence, on $\overline{\Omega}$), converge to $\overline{\sigma}$ and $\overline{\rho}$ in $L^q$, and satisfy $\nabla \cdot {\sigma}_k = {\rho}_k$.
\end{proof}

%%%%%%%%%%%%%%%%%%%%%%%%%%%%%%%%%%%%%%%%%%%%%%%%%%%%%%%%%%%%%%
\subsection{Proof of \texorpdfstring{\cref{th:sharpness}}{Theorem \ref{th:sharpness}}}
\label{ss:sharpness-proof}
Since $\optimalValue \geq \occBound \geq \pdrBound$ by \cref{th:weak-duality}, it suffices to show that $\pdrBound \geq \optimalValue - \varepsilon$ for every $\varepsilon>0$.
To do so, we use an almost-optimal solution of the dual problem \cref{e:conjugate-dual-problem} obtained in \cref{lemma:sharpness} to construct  $(\phi_\epsilon,\eta_\epsilon, h_\epsilon, \ell_\epsilon) \in \mathbb{B}$ feasible for the maximization problem defining $\pdrBound$ in~\cref{e:pdr-bound} and satisfying
\begin{equation}\label{e:epsilon-suboptimality}
    \int_\Omega h_\epsilon(x) \, \dVolume + \int_{\partial\Omega} \ell_\varepsilon(x) \, \dSurf \geq \optimalValue - \varepsilon.
\end{equation}

Let $\sigma_\epsilon \in L^q(\Omega;\R^{m\times n})$ and $\rho_\varepsilon \in L^q(\Omega;\R^{m})$ satisfy $\nabla \cdot \sigma_\varepsilon = \rho_\varepsilon$, as well as
\begin{equation*}
    \int_\Omega -f_0^*(x, \sigma_\epsilon) - f_1^*(x,\rho_\epsilon) \, \dVolume \geq \optimalValue-\varepsilon.
\end{equation*}
The existence of such functions is guaranteed by \cref{lemma:sharpness}.
Using \cref{lemma:continuity,lemma:smoothness}, we may assume $\sigma_\varepsilon \in C^\infty(\overline{\Omega}; \R^{m\times n})$ and $\rho_\varepsilon \in C^\infty(\overline{\Omega}; \R^m)$.
Set
\begin{align*}
	\varphi_\epsilon(x,y) &= -\sigma_\epsilon(x) y, &
	\eta_\epsilon&=0,&
	h_\epsilon(x) &= -f_0^*(x, \sigma_\epsilon(x)) - f_1^*(x,\rho_\epsilon(x)),&
	\ell_\epsilon=0.
\end{align*}
Note that $\phi_\varepsilon \in \phiSpace{p}$ since $\sigma_\varepsilon$ is smooth up to the boundary of $\Omega$, and that $h_\varepsilon \in C(\overline{\Omega})$ because $f_0^*$ and $f_1^*$ are continuous (cf. the proof of \cref{lemma:continuity}).
These choices satisfy \cref{e:epsilon-suboptimality}, so to prove \cref{th:sharpness} we only need to check the inequalities defining the set $\mathbb{B}$ in~\cref{e:B-set}.

Given the constraints of~\cref{e:convex-integrands}, these inequalities become
\begin{align*}
    F^{\phi_\varepsilon,\eta_\varepsilon,h_\varepsilon} &\geq 0 \quad \text{on }\Gamma = \overline{\Omega} \times \R^m \times \R^{m \times n},\\
    G^{\phi_\varepsilon,\eta_\varepsilon,\ell_\varepsilon} &\geq 0 \quad \text{on }\Lambda =  \partial\Omega \times \{0\}.
\end{align*}
The latter is satisfied because problem~\cref{e:convex-integrands} has no boundary terms ($g=0$) and $\phi_\epsilon(x,y)=0$ on $\Lambda$, so $G^{\phi_\varepsilon,\eta_\varepsilon,\ell_\varepsilon}$ vanishes identically on that set. For the first inequality, instead, note that $\D\phi_\varepsilon = - (\nabla \cdot \sigma_\epsilon) \cdot y - \sigma_\varepsilon \cdot z = - \rho_\epsilon \cdot y - \sigma_\varepsilon \cdot z$ and, therefore,
\begin{align*}
    F^{\phi_\varepsilon,\eta_\varepsilon,h_\varepsilon}(x,y,z)
    = &\left[ f_0(x,z) - \sigma_\varepsilon(x) \cdot z + f_0^*\!\left(x, \sigma_\epsilon(x) \right) \right] \\
    &+ \left[ f_1(x,y) - \rho_\varepsilon(x) \cdot y + f_1^*\!\left(x,\rho_\varepsilon(x) \right) \right].
\end{align*}
By definition of the Legendre transforms $f_0^*$ and $f_1^*$, each square bracket is nonnegative for every $x \in \Omega$, so
$F^{\phi_\varepsilon,\eta_\varepsilon,h_\varepsilon} \geq 0$
 on $\Gamma$ as required.
\Cref{th:sharpness} is proved.

%%%%%%%%%%%%%%%%%%%%%%%%%%%%%%%%%%%%%%%%%%%%%%%%%%%%%%%%%%%%%%
\section{Characterizing minimizers and near-minimizers}\label{s:optimizers}

If $\optimalValue=\pdrBound$, so that the occupation measure and pointwise dual relaxation bounds are sharp, solutions of the pointwise dual relaxation~\cref{e:pdr-bound} can be used to derive necessary optimality conditions for global minimizers of the original  problem~\cref{e:inf-problem-general}. We explain why in this brief section, before turning to our examples of non-sharpness.

\begin{proposition}\label{th:sm:optimizers}
	Suppose that $\mathcal{F}^* = \pdrBound$. If $u$ solves the minimization problem~\cref{e:inf-problem-general} and $(\phi,\eta,h,\ell) \in \mathbb{B}$ solves its pointwise dual relaxation~\cref{e:pdr-bound},
	\begin{align*}
	F^{\phi, \eta, h}(x,u(x),\nabla u(x))&=0 \quad \text{ a.e. on } \Omega,\\
	G^{\phi, \eta, \ell}(x,u(x)) &=0 \quad \text{ a.e. on } \partial\Omega.
	\end{align*}
\end{proposition}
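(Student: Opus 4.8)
The plan is to run a complementary-slackness argument against the occupation and boundary measures generated by the assumed minimizer $u$.

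First I would let $(\mu,\nu)$ be the occupation and boundary measures generated by $u$. Since $u$ satisfies the constraints~\cref{e:constraints}, \cref{th:simple-constraints,th:divergence-constraints} show that $(\mu,\nu) \in \omSpace{p}$; in particular $\supp(\mu) \subseteq \Gamma$, $\supp(\nu) \subseteq \Lambda$, all moments of order $\leq p$ are finite, $\langle a,\mu\rangle + \langle b,\nu\rangle = 0$, $\langle \psi,\mu\rangle = \int_\Omega \psi \dVolume$ and $\langle \xi,\nu\rangle = \int_{\partial\Omega} \xi \dSurf$ for all continuous $\psi,\xi$ depending only on $x$, and $\langle \D\phi,\mu\rangle = \langle \phi\cdot\hat{n},\nu\rangle$ for every $\phi \in \phiSpace{p}$ (in particular the dual-optimal one). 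By construction of $\mu,\nu$ (cf.~\cref{e:mu-integral,e:nu-integral}) and optimality of $u$ one has $\langle f,\mu\rangle + \langle g,\nu\rangle = \optimalValue$, while optimality of $(\phi,\eta,h,\ell)$ for~\cref{e:pdr-bound} together with the hypothesis $\optimalValue = \pdrBound$ gives $\int_\Omega h \dVolume + \int_{\partial\Omega} \ell \dSurf = \pdrBound = \optimalValue$.

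Next I would evaluate $\langle F^{\phi,\eta,h},\mu\rangle + \langle G^{\phi,\eta,\ell},\nu\rangle$. Each pairing is finite, since $f$, $\D\phi$, $a$ and $g$, $b$ have at most degree-$p$ growth in $(y,z)$ and in $y$ respectively while $h,\ell$ are bounded on the relevant closures, and $\mu,\nu$ have bounded $p$-moments. Expanding $F^{\phi,\eta,h}$ and $G^{\phi,\eta,\ell}$ via~\cref{e:F-def,e:G-def} and substituting the relations recorded above, the cross terms cancel in pairs and one is left with
\[
    \langle F^{\phi,\eta,h},\mu\rangle + \langle G^{\phi,\eta,\ell},\nu\rangle
    = \langle f,\mu\rangle + \langle g,\nu\rangle - \int_\Omega h \dVolume - \int_{\partial\Omega} \ell \dSurf
    = \optimalValue - \pdrBound = 0 .
\]
Equivalently, this is just the identity $\LagrangeFunction[(\mu,\nu),(\phi,\eta,h,\ell)] = \langle f,\mu\rangle + \langle g,\nu\rangle$ valid for every $(\mu,\nu) \in \omSpace{p}$ (already noted in the proof of \cref{th:weak-duality}), evaluated at our particular data.

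Finally, since $(\phi,\eta,h,\ell) \in \mathbb{B}$ we have $F^{\phi,\eta,h} \geq 0$ on $\Gamma \supseteq \supp(\mu)$ and $G^{\phi,\eta,\ell} \geq 0$ on $\Lambda \supseteq \supp(\nu)$, so both nonnegative summands above must vanish separately: $\langle F^{\phi,\eta,h},\mu\rangle = 0$ and $\langle G^{\phi,\eta,\ell},\nu\rangle = 0$. Rewriting the first via the pushforward definition of $\mu$ gives $\int_\Omega F^{\phi,\eta,h}(x,u(x),\nabla u(x)) \dVolume = 0$ with a nonnegative integrand — the point $(x,u(x),\nabla u(x))$ lies in $\Gamma$ for a.e.\ $x$ because $u$ satisfies~\cref{e:constraints-general-2} — hence $F^{\phi,\eta,h}(x,u(x),\nabla u(x)) = 0$ a.e.\ on $\Omega$; the boundary identity follows identically from $\langle G^{\phi,\eta,\ell},\nu\rangle = 0$ together with~\cref{e:constraints-general-3}. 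There is no substantial obstacle here: the only points needing care are the finiteness of the two pairings (handled by the $p$-growth/$p$-moment bookkeeping) and the fact that $\mu$, $\nu$ are genuinely carried by $\Gamma$, $\Lambda$, i.e.\ that the pointwise constraints~\cref{e:constraints-general-2,e:constraints-general-3} are in force for the minimizer $u$.
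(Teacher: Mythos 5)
Your proof is correct and is essentially the paper's argument: both establish complementary slackness by showing that the total of the nonnegative quantities $F^{\phi,\eta,h}(x,u,\nabla u)$ and $G^{\phi,\eta,\ell}(x,u)$ integrates to $\optimalValue-\pdrBound=0$, using the divergence identity, the integral constraint, and the marginal/pushforward relations to make the cross terms cancel. Routing the computation through the occupation and boundary measures generated by $u$ (via \cref{th:simple-constraints,th:divergence-constraints}) rather than writing the integrals over $\Omega$ and $\partial\Omega$ directly, as the paper does with \cref{e:constraints-general-1,e:div-theorem}, is only a change of notation, not of substance.
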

\begin{remark}
    These optimality conditions distinguish local minimizers from global ones, unlike Euler--Lagrange equations (with the exception of convex problems).
\end{remark}

\begin{remark}
	Recall from \cref{th:sharpness-corollary} that if $n=1$ or $m=1$, then any nonconvex dependence on $\nabla u$ can be convexified while preserving the minimum. In this case, optimality conditions alternative to those in \cref{th:sm:optimizers} take the form of a differential inclusion; see \cite{Munoz2000} for an example. Elucidating the precise relation between the various optimality conditions in the literature is beyond the scope of this work.
\end{remark}

\begin{proof}[Proof of~\cref{th:sm:optimizers}]
	Let
	\begin{equation}\label{e:optimal-conditions-Psi}
		\Psi(u) = \int_\Omega f(x,u,\nabla u) \dVolume + \int_{\partial\Omega} g(x,u) \dSurf
	\end{equation}
	be the integral functional being minimized in~\cref{e:inf-problem-general}, and suppose that $u$ is optimal. Since $(\phi,\eta,h,\ell)$ attains equality in~\cref{e:pdr-bound} and $\pdrBound = \optimalValue=\Psi(u)$, we have
	\begin{equation}\label{e:exact-bound}
	\Psi(u) = \int_{\Omega} h(x) \dVolume + \int_{\partial\Omega} \ell(x) \dSurf.
	\end{equation}
	Next, recall the definitions of the sets $\Gamma$ and $\Lambda$ from~\cref{e:Gamma-def,e:Lambda-def}. Since $(\phi,\eta,h,\ell)$ is feasible for the dual relaxation~\cref{e:pdr-bound}, the functions $F^{\phi, \eta, h}$ and $G^{\phi, \eta, \ell}$ are nonnegative on $\Gamma$ and $\Lambda$, respectively. Moreover, $(x,u(x),\nabla u(x)) \in \Gamma$ for almost every $x \in \Omega$ and
	$(x,u(x)) \in \Lambda$
	for almost every $x  \in \partial\Omega$ since $u$ satisfies the constraints in~\cref{e:constraints}. Thus,
	\begin{eqnarray*}
	\Psi(u)
	&\leq& \Psi(u)
		  + \int_{\Omega} F^{\phi,\eta,h}(x,u,\nabla u)  \dVolume
		  + \int_{\partial\Omega} G^{\phi,\eta,\ell}(x,u)  \dSurf
	\\
	&\overset{\text{\cref{e:exact-bound}}}{=} &
	\int_{\Omega}\![F^{\phi,\eta,h}(x,u,\nabla u) + h(x)]  \dVolume + \int_{\partial\Omega}\![G^{\phi,\eta,\ell}(x,u) + \ell(x)]  \dSurf
	\\
	&\overset{\text{\cref{e:constraints-general-1}\&\cref{e:div-theorem}}}{=}&
	\int_{\Omega} f(x,u,\nabla u) \dVolume + \int_{\partial\Omega} g(x,u)\dSurf
% 	= \Psi(u).
	\\
	&=& \Psi(u).
	\end{eqnarray*}
	The first inequality is therefore an equality, so the functions $F^{\phi, \eta, h}(x,u(x),\nabla u(x))$ and $G^{\phi, \eta, \ell}(x,u(x))$ vanish almost everywhere on $\Omega$ and $\partial\Omega$, respectively.
\end{proof}

The previous result assumed that the pointwise dual relaxation can be solved, i.e., that it admits an optimizer. In lieu of this, a partial characterization of `near optimizers' can still be obtained provided that $\optimalValue=\pdrBound$.
Let $\Psi$ be as in~\cref{e:optimal-conditions-Psi}.

\begin{proposition}
	Let $\optimalValue = \pdrBound$.
	Given $\varepsilon>0$, let $u$ satisfy $\Psi(u)\leq \optimalValue + \varepsilon$ and let $(\phi,\eta,h,\ell) \in \mathbb{B}$ be admissible for the pointwise dual relaxation~\cref{e:pdr-bound} with $\optimalValue-\varepsilon \leq \int_{\Omega} h(x) \dVolume + \int_{\partial\Omega} \ell(x)$.
	Then, for every $\delta>0$, the volume and surface measures
	\begin{align*}
	\lambda_\delta &:= \abs{ \{x \in \Omega:\; F^{\phi \eta h}(x,u(x),\nabla u(x))\geq \delta\} }\\
	\sigma_\delta &:= \abs{ \{x \in \partial\Omega:\; G^{\phi \xi \ell}(x,u(x))\geq \delta\} }% \leq \frac{\varepsilon}{\delta}.
	\end{align*}
	satisfy
	\[
	\lambda_\delta + \sigma_\delta \leq \frac{2\varepsilon}{\delta}.
	\]
\end{proposition}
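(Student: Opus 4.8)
The plan is to mimic the chain of (in)equalities from the proof of \cref{th:sm:optimizers}, but keeping track of the slack instead of forcing it to zero. First I would start from the admissibility of $(\phi,\eta,h,\ell)$ for the pointwise dual relaxation: since $F^{\phi,\eta,h}\geq 0$ on $\Gamma$ and $G^{\phi,\eta,\ell}\geq 0$ on $\Lambda$, and since $(x,u(x),\nabla u(x))\in\Gamma$ a.e.\ on $\Omega$ and $(x,u(x))\in\Lambda$ a.e.\ on $\partial\Omega$ by the constraints~\cref{e:constraints}, both integrands $F^{\phi,\eta,h}(x,u,\nabla u)$ and $G^{\phi,\eta,\ell}(x,u)$ are nonnegative. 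Then, exactly as in the earlier proof, using the integral constraint~\cref{e:constraints-general-1} and the divergence theorem~\cref{e:div-theorem}, one gets the identity
\begin{equation*}
\int_\Omega F^{\phi,\eta,h}(x,u,\nabla u)\dVolume + \int_{\partial\Omega} G^{\phi,\eta,\ell}(x,u)\dSurf = \Psi(u) - \int_\Omega h(x)\dVolume - \int_{\partial\Omega}\ell(x)\dSurf.
\end{equation*}

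Next I would bound the right-hand side. Since $\Psi(u)\leq\optimalValue+\varepsilon$ by hypothesis and $\int_\Omega h + \int_{\partial\Omega}\ell \geq \optimalValue-\varepsilon$ by the near-optimality of the dual variables, the right-hand side is at most $2\varepsilon$. Hence
\begin{equation*}
\int_\Omega F^{\phi,\eta,h}(x,u,\nabla u)\dVolume + \int_{\partial\Omega} G^{\phi,\eta,\ell}(x,u)\dSurf \leq 2\varepsilon,
\end{equation*}
and both terms on the left are individually nonnegative, so each is at most $2\varepsilon$.

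Finally I would apply Chebyshev's (Markov's) inequality to each nonnegative integrand separately. On the set $\{x\in\Omega: F^{\phi,\eta,h}(x,u(x),\nabla u(x))\geq\delta\}$ of measure $\lambda_\delta$, the bulk integrand is $\geq\delta$, so $\delta\,\lambda_\delta \leq \int_\Omega F^{\phi,\eta,h}(x,u,\nabla u)\dVolume \leq 2\varepsilon$, giving $\lambda_\delta\leq 2\varepsilon/\delta$; the same argument applied to $G^{\phi,\eta,\ell}$ on $\partial\Omega$ gives $\sigma_\delta\leq 2\varepsilon/\delta$. Since both bounds hold with the same right-hand side, in particular $\lambda_\delta+\sigma_\delta$ is bounded by the sum of the two slacks, which is $\leq 2\varepsilon/\delta$ once one observes that $\delta(\lambda_\delta+\sigma_\delta)\leq \int_\Omega F^{\phi,\eta,h}+\int_{\partial\Omega}G^{\phi,\eta,\ell}\leq 2\varepsilon$ directly. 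I do not anticipate a serious obstacle here: the only point requiring a little care is ensuring that the identity relating the two integrals to $\Psi(u)-\int h-\int\ell$ is valid under the stated growth/regularity hypotheses, but this is exactly the computation already performed in the proof of \cref{th:sm:optimizers} and carries over verbatim. The rest is Markov's inequality.
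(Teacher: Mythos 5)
Your proposal is correct and follows essentially the same route as the paper: express $\int_\Omega F^{\phi,\eta,h}(x,u,\nabla u)\dVolume+\int_{\partial\Omega}G^{\phi,\eta,\ell}(x,u)\dSurf$ as $\Psi(u)-\int_\Omega h-\int_{\partial\Omega}\ell\leq 2\varepsilon$ using \cref{e:constraints-general-1} and \cref{e:div-theorem}, note the integrands are nonnegative by feasibility, and conclude by Chebyshev's inequality applied to the combined integral. The only difference is cosmetic: you write the intermediate identity explicitly (and with the signs stated correctly), whereas the paper compresses this step.
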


\begin{proof}
	Our assumptions on $u$ and $(\phi,\eta,h,\ell)$ imply that
	\begin{align*}
	2\varepsilon
	&\geq
	\int_{\Omega} \left[ f(x,u,\nabla u) + h(x)\right] \dVolume
	+ \int_{\partial\Omega} \left[ g(x,u) + \ell(x)\right] \dSurf
	\\
	&=\int_{\Omega} F^{\phi,\eta,h}(x,u,\nabla u) \dVolume
	+ \int_{\partial\Omega} G^{\phi,\eta,\ell}(x,u) \dSurf,
	\end{align*}
	where the second line follows because the terms added to the integrands give a zero net contribution by~\cref{e:constraints-general-1,e:div-theorem}.
	Since the tuple $(\phi,\eta,h,\ell)$ is feasible for~\cref{e:pdr-bound}, the functions $x \mapsto F^{\phi \eta h}(x,u(x),\nabla u(x))$ and $x \mapsto G^{\phi \xi \ell}(x,u(x)) $ are nonnegative. The result follows from Chebyshev's inequality.
\end{proof}

%%%%%%%%%%%%%%%%%%%%%%%%%%%%%%%%%%%%%%%%%%%%%%%%%%%%%%%%%%%%%%%%%%%%%%%%%%%%%%%%
\section{Non-sharpness of occupation measure bounds}\label{s:failure}
We end with two examples where the relaxation by occupation and boundary measures presented in \cref{s:om-relaxation} is not sharp, meaning that $\optimalValue > \occBound$. In both cases, we exhibit a pair $(\mu,\nu)$ of measures satisfying the constraints of the occupation measure relaxation~\cref{e:occ-measures-minimization} and for which the objective value is strictly less than $\optimalValue$. \Cref{ss:failure-double-well} treats a vectorial example which is nonconvex in $\nabla u$, coming from mathematical material science. A different example with convex dependence on $\nabla u$ and nonconvex dependence on $u$ appears in \cite{Korda2022}.
\Cref{ss:failure-poincare} gives an example showing how occupation measures fail to capture optimal Poincar\'e constants for mean-zero functions in one-dimension. Other examples of non-sharpness based on non-convex constraints also appear in  \cite{Korda2022}.

\subsection{A double-well problem}\label{ss:failure-double-well} Fix any $n,m\geq 2$, let $\Omega \subset \R^n$ be a bounded Lipschitz domain, and consider the non-convex minimization problem
\begin{equation}\label{e:double-well-problem}
\optimalValue := \inf_{\substack{u:\Omega \to \R^m \\[0.5ex] u = 0 \text{ on } \partial\Omega}} \int_{\Omega} \abs{\nabla u(x) - A}^2 \, \abs{\nabla u(x) -B}^2\, \dVolume,
\end{equation}
where $\abs{\cdot}$ is the Frobenius matrix norm. We take $A,B\in\mathbb{R}^{m\times n}$ to satisfy
\begin{equation}\label{e:double-well:rank-condition}
    \rank(A-B)\geq 2
\end{equation}
which is possible given our choices for $n$ and $m$.
It is a well-known but non-trivial fact that \cref{e:double-well-problem} does not have a minimizer, even though its infimum  $\optimalValue$ is finite.  To evaluate it, then, one must find a way to pass to the limit along minimizing sequences that drive the integral to its infimal value. Here, we explain this using the theory of gradient Young measures, which we briefly recall; see~\cite{Muller1999,Pedregal1997,Rindler2018} for details.

A $W^{1,p}$-\emph{gradient Young measure} generated by a weakly converging sequence $\{u_k\}$ $\subset W^{1,p}(\Omega;\R^m)$ is a family $\{\gym\}_{x \in \Omega}$ of probability measures on $\R^{m\times n}$ such that~\cite{Rindler2018}:
\begin{enumerate}[(i),leftmargin=!,align=left,labelwidth=\widthof{(iii)},topsep=0.75ex,itemsep=0.5ex]
    \item $\int_\Omega \int_{\R^{m\times n}} \abs{z}^p \dgym(z) \dVolume < \infty$;
    \item\label{gym:measurability} Given any $\varphi:\Omega \times \mathbb{R}^{m\times n} \to \mathbb{R}$ that is measurable in $x$ and continuous in $z$,\footnote{Such functions are called \emph{Carath\'eodory functions}.} the function $x \mapsto \int \varphi(x,z) \dgym(z)$ is measurable;
    \item Given any $\varphi$ as in \ref{gym:measurability} such that $\{\varphi(x,\nabla u_k)\}$ is uniformly $L^1$-bounded and equiintegrable, $\varphi(x,\nabla u_k)$ converges to $\int_{\mathbb{R}^{2\times 2}}\varphi(x,z) {\rm d}\gym(z)$ weakly in $L^1(\Omega)$.
\end{enumerate}
The space of all $W^{1,p}$-gradient Young measures is denoted by $\GYMspace{p}(\Omega;\R^{m\times n})$.

Applying the theory of gradient Young measures to problem \cref{e:double-well-problem}, one finds that
\begin{equation}\label{e:gym-relaxation}
\optimalValue = \min_{
			\substack{
        		u \in W^{1,p}(\Omega;\R^{m\times n})\\
				\{\gym\} \in \GYMspace{p}(\Omega;\R^{m\times n})\\
				\int z \dgym(z) = \nabla u(x) \\
				u = 0 \text{ on } \partial\Omega
				}}
			\int_{\Omega}\int_{\R^{2\times 2}} \abs{z - A}^2 \abs{z -B}^2 \, \dgym(z) \dVolume
\end{equation}
(see, e.g., \cite[Theorem~4.9]{Muller1999}). In contrast, the occupation measure relaxation is
\begin{equation}\label{e:double-well-om-relax}
\occBound = \min_{
    (\mu,\nu) \in \omSpace{p}
	}
	\langle \abs{z - A}^2 \abs{z - B}^2, \mu \rangle,
\end{equation}
where for our example the set $\omSpace{p}$ from~\cref{e:A-set-definition} is
\begin{equation*}
    \omSpace{p} = \left\{
        (\mu,\nu) \in \measSpace:\; \text{\cref{e:marginal-constraints-mu}, \cref{e:marginal-constraints-nu}
        and \cref{e:divergence-constraints-measures}}
    \right\}.
\end{equation*}
Whether or not the minima in \cref{e:gym-relaxation} and \cref{e:double-well-om-relax} are the same comes down to the relation between gradient Young and occupation measures.

The pushforward operation provides a natural embedding of $W^{1,p}\times \GYMspace{p}$ into $\omSpace{p}$. Precisely, to each Sobolev function--gradient Young measure pair $(u,\{\gym\})$ we can associate the occupation and boundary measures
\begin{align*}
	\mu({\rm d}x, {\rm d}y, {\rm d}z) &:= \dVolume \otimes \delta_{u(x)}({\rm d}y) \otimes \gym( {\rm d} z),
	\\
	\nu({\rm d}x, {\rm d}y) &:= \dSurf \otimes \delta_{u(x)}({\rm d} y).
\end{align*}
However, this embedding is far from a one-to-one correspondence since when $n,m\geq 2$ the definition of $\omSpace{p}$ misses crucial nonlinear constraints satisfied by gradient Young measures. Indeed, the latter satisfy the nonlinear Jensen-type inequalities~\cite{Kinderlehrer1994}
\begin{equation}\label{e:quasiconvex-jensen}
h\!\left(\int_{\mathbb{R}^{m\times n}} z \dgym(z) \right)
\leq
\int_{\R^{m\times n}} h(z) \dgym(z)
\end{equation}
for every function $h: \R^{m \times n} \to \R$ that grows no faster than $|z|^p$ and is \emph{quasiconvex}, i.e., $h(z) \leq \fint_D h(z+\nabla \phi) \dVolume$ for all $z\in\R^{m\times n}$ and all compactly supported smooth functions $\phi:D \to \R^m$ ($D \subset \R^n$ can be any bounded Lipschitz domain~\cite[Ch.~5]{Rindler2018}). The inability of occupation measure relaxations to capture these nonlinear constraints results in a relaxation gap.

\begin{proposition}\label{th:relaxation-gap-double-well}
	For the minimization problem in~\cref{e:double-well-problem}, $\optimalValue>0$ but $\occBound = 0$.
\end{proposition}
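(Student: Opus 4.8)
The plan is to prove the two assertions separately. The inequality $\optimalValue>0$ is the classical statement that the incompatible double well has a relaxation gap; I would derive it from the gradient Young measure formula \cref{e:gym-relaxation} together with rigidity. The identity $\occBound=0$ is the new point, and reduces to writing down a single feasible pair $(\mu,\nu)\in\omSpace{p}$ on which the objective in \cref{e:double-well-om-relax} vanishes.

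For $\optimalValue>0$ I would argue by contradiction. The minimum in \cref{e:gym-relaxation} is attained, say by $(u^*,\{\gym\})$; if it equalled $0$, then $\int_\Omega\int|z-A|^2|z-B|^2\dgym(z)\dVolume=0$, so $\supp\gym\subseteq\{A,B\}$ for a.e.\ $x$. Since the Jensen inequalities \cref{e:quasiconvex-jensen} hold slice-by-slice (cf.\ \cite{Kinderlehrer1994}), for a.e.\ $x$ the measure $\gym$ is a homogeneous gradient Young measure supported on the two matrices $A,B$; as $\rank(A-B)\ge2$, the standard rigidity of two-well gradient Young measures (see, e.g., \cite{Muller1999,Rindler2018}) forces $\gym\in\{\delta_A,\delta_B\}$, whence $\nabla u^*(x)\in\{A,B\}$ a.e. A second rigidity fact --- a Sobolev map whose gradient takes only the two non-rank-one-connected values $A,B$ has a.e.\ constant gradient --- then gives $\nabla u^*\equiv M$ for a fixed $M\in\{A,B\}$, so $u^*$ is affine; since $u^*=0$ on $\partial\Omega$ and $\partial\Omega$ is not contained in any affine hyperplane, this forces $u^*\equiv0$ and $M=0$, contradicting $0\notin\{A,B\}$ (true for the wells under consideration). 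Hence $\optimalValue>0$.

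For $\occBound=0$ I would exhibit a non-laminar competitor. Pick $\theta\in(0,1)$ with $\theta A+(1-\theta)B=0$ --- available for our wells, e.g.\ when $B=-A$ with $\rank A\ge2$; this is where the placement of $0$ on the segment between $A$ and $B$ enters --- and set
\begin{align*}
\mu &:= \dVolume\otimes\delta_0({\rm d}y)\otimes\big(\theta\delta_A+(1-\theta)\delta_B\big)({\rm d}z), &
\nu &:= \dSurf\otimes\delta_0({\rm d}y).
\end{align*}
These measures are nonnegative, have all moments finite (their $y$- and $z$-marginals are compactly supported), and are supported on $\Gamma=\overline\Omega\times\R^m\times\R^{m\times n}$ and $\Lambda=\partial\Omega\times\{0\}$, so $(\mu,\nu)\in\measSpace$. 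Conditions \cref{e:marginal-constraints-mu,e:marginal-constraints-nu} hold because $\delta_0$ and $\theta\delta_A+(1-\theta)\delta_B$ are probability measures. For \cref{e:divergence-constraints-measures} I would compute, for any $\phi\in\phiSpace{p}$,
\begin{align*}
\langle \D\phi,\mu\rangle
&=\int_\Omega\Big[\nabla_x\cdot\phi(x,0)+\sum_{i,j}\tfrac{\partial\phi_i}{\partial y_j}(x,0)\,\big(\theta A+(1-\theta)B\big)_{ji}\Big]\dVolume\\
&=\int_\Omega\nabla_x\cdot\phi(x,0)\dVolume
=\int_{\partial\Omega}\phi(x,0)\cdot\hat{n}(x)\dSurf
=\langle\phi\cdot\hat{n},\nu\rangle,
\end{align*}
the second equality using $\theta A+(1-\theta)B=0$ and the third being the divergence theorem. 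Thus $(\mu,\nu)\in\omSpace{p}$, while $\langle|z-A|^2|z-B|^2,\mu\rangle=\theta|A-A|^2|A-B|^2+(1-\theta)|B-A|^2|B-B|^2=0$. Hence $\occBound\le0$, and since the integrand is nonnegative and $\mu\ge0$ we get $\occBound=0$.

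The substantive step is $\optimalValue>0$: it rests entirely on the two rigidity facts for incompatible wells, which I would quote from the literature rather than reprove, together with attainment in \cref{e:gym-relaxation}. By contrast $\occBound=0$ is a short verification; its only real content is conceptual --- the linear constraints defining $\omSpace{p}$ see only the marginals and the divergence identities, hence are blind to the nonlinear Jensen inequalities \cref{e:quasiconvex-jensen}, and so cannot exclude the two-point measure $\theta\delta_A+(1-\theta)\delta_B$ whose barycenter $0$ is not realizable as a gradient Young measure when $\rank(A-B)\ge2$. A minor point worth spelling out is that $\phiSpace{p}$ contains enough functions (e.g.\ $\phi_i(x,y)=\psi(x)y_k$) that \cref{e:divergence-constraints-measures} pins the $y$-barycenter of every member of $\omSpace{p}$ to lie in $W^{1,p}_0(\Omega;\R^m)$ with gradient equal to the $z$-barycenter --- precisely the structural constraint the competitor above is built to satisfy.
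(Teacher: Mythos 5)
Your proof is correct, and its two halves relate to the paper's differently. The $\occBound=0$ half is essentially the paper's own argument: the paper normalizes to $A=I$, $B=-I$ and uses exactly your measure with $\theta=\tfrac12$, verifying moments, marginals, and \cref{e:divergence-constraints-measures} by the same computation (your closing remark that \cref{e:divergence-constraints-measures} forces the $z$-barycenter to be the gradient of the $y$-barycenter with zero trace is also the right way to see why the barycenter must lie on the segment $[A,B]$; you flag this hypothesis explicitly, while the paper buries it in its ``change of variables'' normalization). The $\optimalValue>0$ half, however, takes a genuinely different route. The paper avoids rigidity theorems altogether: from the two-point ansatz \cref{e:double-well-opt-gym} it uses only that $\pm\det$ (a null Lagrangian) is quasiconvex, so \cref{e:quasiconvex-jensen} gives $\det(\nabla u(x))=\int\det z\,\dgym(z)=1$ a.e., while $\int_\Omega\det(\nabla u)\,\dVolume=0$ by the boundary condition --- a single, self-contained computation. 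You instead invoke localization of gradient Young measures plus two cited rigidity facts (triviality of homogeneous two-well gradient Young measures for incompatible wells, and Ball--James rigidity for exact solutions of the two-gradient inclusion) and then use the boundary condition to force $\nabla u^*\equiv 0$. Both chains are valid; the paper's buys elementariness and brevity (one minor identity, no external theorems beyond the Jensen inequalities already quoted), while yours buys a cleaner structural picture --- it identifies exactly which nonlinear information (two-well rigidity) the linear constraints of $\omSpace{p}$ cannot see, at the cost of quoting stronger results from the literature.
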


\begin{proof}
    It suffices to consider the case $A=I$, $B=-I$ by a change of variables. We also fix $n=m=2$, since the general case is analogous. The exponent is $p=4$.

	First, we prove that $\optimalValue > 0$. By contradiction, if the minimimum of the gradient Young measure problem~\cref{e:gym-relaxation} is zero, it is solved by a pair $(u,\{\gym\})$ with
	\begin{equation}\label{e:double-well-opt-gym}
		\gym(z) = \theta(x) \delta_I(z) + \left( 1-\theta(x) \right) \delta_{-I}(z)
	\end{equation}
	for $\theta: \Omega \to [0,1]$. However, this is incompatible with the constraint $\int z\dgym = \nabla u(x)$ and the inequalities~\cref{e:quasiconvex-jensen}. Specifically, since $\det(z)$ and $-\det(z)$ are both quasiconvex (see, e.g., \cite[Corollary~5.9]{Rindler2018}), any gradient Young measure in~\cref{e:gym-relaxation} must obey
	\begin{equation*}
		\det\left(\nabla u(x)\right) = \int_{\R^{2\times 2}} \det(z) \dgym(z) \qquad \text{a.e. } x \in \Omega.
	\end{equation*}
	This identity fails for the measure in~\cref{e:double-well-opt-gym} because
	\begin{equation*}
		\int_{\R^{2\times 2}} \det(z) \dgym(z)
		= \theta(x)\det(I) + \left( 1-\theta(x) \right) \det(-I)
		= 1,
	\end{equation*}
	while
	\begin{equation*}
		\int_\Omega \det(\nabla u) \dVolume
		= \int_\Omega \nabla \cdot\left(u_1 \frac{\partial u_2}{\partial x_2}, -u_1 \frac{\partial u_2}{\partial x_1} \right) \dVolume
		= \int_{\partial\Omega} \left(u_1 \frac{\partial u_2}{\partial x_2}, -u_1 \frac{\partial u_2}{\partial x_1} \right) \cdot \hat{n} \dSurf
		= 0
	\end{equation*}
by the boundary condition $u=0$ at $\partial\Omega$.

	Next, we show that $\occBound = 0$. It is clear that $\occBound \geq 0$, so we only need to find measures $\mu$ and $\nu$ that are feasible for~\cref{e:double-well-om-relax} and such that $\langle \abs{z - I}^2 \abs{z + I}^2, \mu \rangle = 0$. We claim that suitable choices are
	\begin{gather*}
	\mu
	:=
	\dVolume \otimes \delta_{0}({\rm d}y) \otimes \left( \frac12 \delta_{I}({\rm d}z) + \frac12 \delta_{-I}({\rm d}z)\right)
	\\
	\nu
	:=
	\dSurf \otimes \delta_{0}({\rm d}y).
	\end{gather*}
	It is clear that $\mu$ and $\nu$ have finite moments of order $p$ or less. It is also clear that the $x$-marginals of $\mu$ and $\nu$ coincide with the volume and surface measures on $\Omega$, as required by \cref{e:marginal-constraints-mu,e:marginal-constraints-nu}.
	To check~\cref{e:divergence-constraints-measures}, fix any $\varphi \in \phiSpace{4}$ and, summing over repeated indices to simplify the notation, observe that
	\begin{multline*}
	\langle \D\varphi, \mu \rangle
	= \int_\Omega \left[
		\frac{\partial\varphi_i}{\partial x_i} + \frac{\partial\varphi_i}{\partial x_j} \left(\frac12 I_{ji} - \frac12 I_{ji} \right)\right]_{(x,0)} \dVolume
	\\
	=\int_\Omega \nabla \cdot \varphi(x,0) \dVolume
	=\int_{\partial\Omega} \varphi(x,0) \cdot \hat{n}(x) \dSurf
	= \langle \varphi \cdot \hat{n}, \nu \rangle.
	\end{multline*}
	An analogous calculation shows that $\langle \abs{z - I}^2 \abs{z + I}^2, \mu \rangle = 0$, concluding the proof of \cref{th:relaxation-gap-double-well}.
\end{proof}

%%%%%%%%%%%%%%%%%%%%%%%%%%%%%%%%%%%%%%%%%%%%%%%%%%%%%%%%%%%%%%%%%%%%%%%%%%%%
\subsection{Optimal Poincar\'e constant for univariate mean-zero functions}
\label{ss:failure-poincare}

For our next example, we consider a one-dimensional minimization problem giving the optimal Poincar\'e constant for univariate mean-zero functions on $\Omega = (-1,1)$,
\begin{equation}\label{e:poincare-mean-zero}
\mathcal{F}^* := \min_{\substack{\int_{-1}^1 u^2 \dVolume = 1\\ \int_{-1}^1 u \dVolume = 0}} \;\int_{-1}^1 \abs{\frac{{\rm d} u}{{\rm d}x}}^2 \dVolume.
\end{equation}
The occupation measure relaxation for this minimization problem reads
\begin{equation}\label{e:poincare-mean-zero-om-relax}
\occBound = \min_{
    (\mu,\nu) \in \omSpace{2}
}
\langle z^2, \mu \rangle,
\end{equation}
where the constraints set $\omSpace{2}$ in \cref{e:A-set-definition} now includes the constraints $\langle y, \mu \rangle = 0$ and $\langle y^2, \mu \rangle = 1$ as well as \cref{e:marginal-constraints-mu},\cref{e:marginal-constraints-nu} and \cref{e:divergence-constraints-measures}.

The next result proves that $\optimalValue>\occBound$. The relaxation gap arises because the \textit{average} of the occupation and boundary measures generated by the constant functions $u(x)=\pm\smash{1/\sqrt{2}}$ is feasible for~\cref{e:poincare-mean-zero-om-relax}, even though their generating functions are clearly not admissible in~\cref{e:poincare-mean-zero} (they violate the mean-zero constraint).

\begin{proposition}\label{th:relaxation-gap-poincare-mean-zero}
	For the minimization problem in~\cref{e:poincare-mean-zero}, $\optimalValue=\smash{\frac{\pi^2}{4}}$ but $\occBound = 0$.
\end{proposition}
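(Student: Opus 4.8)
The plan is to show the two inequalities $\optimalValue\ge\frac{\pi^2}{4}$ and $\occBound\le 0$, together with the trivial bound $\occBound\ge 0$. The first is the classical Poincar\'e--Wirtinger inequality on $(-1,1)$: among $u\in H^1(-1,1)$ with $\int_{-1}^1 u\,\dVolume = 0$ and $\int_{-1}^1 u^2\,\dVolume = 1$, the minimum of $\int_{-1}^1 |u'|^2\,\dVolume$ equals the smallest positive Neumann eigenvalue of $-\tfrac{{\rm d}^2}{{\rm d}x^2}$ on $(-1,1)$, namely $\frac{\pi^2}{4}$, attained by $u(x)=\sin(\tfrac{\pi}{2}x)$ (suitably normalized). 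One can either quote this or, to be self-contained, diagonalize via the cosine basis $\{\cos(\tfrac{k\pi}{2}(x+1))\}_{k\ge0}$: the mean-zero constraint kills the $k=0$ mode, so $\int|u'|^2 = \sum_{k\ge1}(\tfrac{k\pi}{2})^2 a_k^2 \ge \tfrac{\pi^2}{4}\sum_{k\ge1}a_k^2 = \tfrac{\pi^2}{4}\int u^2 = \tfrac{\pi^2}{4}$, with equality iff only the $k=1$ mode is present.

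For the relaxation, the nonnegativity $\occBound\ge0$ is immediate since $\langle z^2,\mu\rangle\ge0$ for every nonnegative measure $\mu$. It remains to exhibit a feasible pair $(\mu,\nu)\in\omSpace{2}$ achieving $\langle z^2,\mu\rangle=0$. Following the remark preceding the statement, I would take the average of the occupation/boundary measures generated by the two constant functions $u_\pm(x)=\pm 1/\sqrt2$:
\begin{align*}
	\mu &:= \dVolume \otimes \left(\tfrac12\,\delta_{1/\sqrt2}({\rm d}y) + \tfrac12\,\delta_{-1/\sqrt2}({\rm d}y)\right) \otimes \delta_0({\rm d}z),\\
	\nu &:= \dSurf \otimes \left(\tfrac12\,\delta_{1/\sqrt2}({\rm d}y) + \tfrac12\,\delta_{-1/\sqrt2}({\rm d}y)\right).
\end{align*}
Then one checks each constraint defining $\omSpace{2}$ in turn: the $p$-moment bounds (trivial, as all measures are compactly supported and $\Omega$ is bounded); the $x$-marginal conditions \cref{e:marginal-constraints-mu,e:marginal-constraints-nu} (clear, since the $y$- and $z$-fibers are probability measures); the moment constraints $\langle y,\mu\rangle = \tfrac12(1/\sqrt2) + \tfrac12(-1/\sqrt2) = 0$ and $\langle y^2,\mu\rangle = \tfrac12\cdot\tfrac12 + \tfrac12\cdot\tfrac12 = 1 = |\Omega|/2$ — here I should double-check the normalization, since $\langle 1,\mu\rangle = |\Omega| = 2$, so the constraint is $\langle y^2,\mu\rangle = 1$ matching $\int u^2 = 1$, and indeed $\langle y^2,\mu\rangle = \int_{-1}^1 \tfrac12\,\dVolume = 1$; and the divergence constraint \cref{e:divergence-constraints-measures}. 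For the last, fix $\phi\in\phiSpace{2}$; since $z$ is supported at $0$, $\langle\D\phi,\mu\rangle = \int_{-1}^1 \tfrac12\big[\tfrac{{\rm d}}{{\rm d}x}\phi(x,1/\sqrt2) + \tfrac{{\rm d}}{{\rm d}x}\phi(x,-1/\sqrt2)\big]\dVolume$, which by the fundamental theorem of calculus equals $\tfrac12\big[\phi(x,1/\sqrt2)+\phi(x,-1/\sqrt2)\big]\hat n(x)$ evaluated at the endpoints $x=\pm1$, i.e.\ exactly $\langle\phi\cdot\hat n,\nu\rangle$.

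Finally, $\langle z^2,\mu\rangle = \int_{-1}^1\!\int z^2\,\delta_0({\rm d}z)\,\dVolume = 0$, so $\occBound\le 0$, hence $\occBound = 0 < \tfrac{\pi^2}{4} = \optimalValue$. I do not anticipate a serious obstacle: the only place to be careful is the bookkeeping of the moment normalizations (getting $\langle y^2,\mu\rangle=1$ rather than $\tfrac12$, because $|\Omega|=2$) and confirming that the convex combination $\tfrac12\delta_{1/\sqrt2}+\tfrac12\delta_{-1/\sqrt2}$ — rather than either $\delta_{\pm1/\sqrt2}$ alone — is what makes the mean-zero constraint $\langle y,\mu\rangle=0$ hold while keeping the divergence identity valid (the $z$-fiber being $\delta_0$ is forced by wanting $\langle z^2,\mu\rangle=0$, and this is consistent because both constant functions have zero derivative). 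The contrast with genuine admissibility in \cref{e:poincare-mean-zero} is exactly that no single function can be simultaneously $+1/\sqrt2$ and $-1/\sqrt2$; occupation measures permit this "mixture" while gradient Young measures (which here would need a nontrivial $x$-dependence to interpolate) do not produce a corresponding admissible competitor at zero cost.
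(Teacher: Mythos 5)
Your proposal is correct and matches the paper's own argument essentially verbatim: the same mixture measures $\mu=\dVolume\otimes\tfrac12\bigl(\delta_{1/\sqrt2}+\delta_{-1/\sqrt2}\bigr)\otimes\delta_0$ and $\nu=\dSurf\otimes\tfrac12\bigl(\delta_{1/\sqrt2}+\delta_{-1/\sqrt2}\bigr)$, the same fundamental-theorem-of-calculus check of the divergence constraint, and the standard Poincar\'e--Wirtinger computation giving $\optimalValue=\pi^2/4$. Your normalization bookkeeping is also right (indeed $\langle y^2,\mu\rangle=1$, as the constraint requires), so nothing needs fixing.
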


\begin{remark}
    The reader may want to contrast this with \cite{Chernyavskiy2020}, which proves sharpness for some optimal Poincar\'e constants in the Dirichlet boundary condition case.
\end{remark}
\begin{proof}
	The optimal value $\mathcal{F}^*= \smash{\frac14}\pi^2$ is easily computed upon solving the Euler--Lagrange equation for problem~\cref{e:poincare-mean-zero}, and it is attained by $u^*(x) = \sin(\pi x/2)$.

	To prove that $\occBound=0$ we observe that $\occBound$ is clearly nonnegative, so it suffices to exhibit a pair $(\mu,\nu)$ that is feasible for the relaxed problem~\cref{e:poincare-mean-zero-om-relax} and achieves $\langle z^2, \mu \rangle = 0$. For this, we consider
	\begin{gather*}
	\mu
	:= \tfrac12 \dVolume \otimes
		\left[\delta_{\frac{1}{\sqrt{2}}}({\rm d}y) + \delta_{-\frac{1}{\sqrt{2}}}({\rm d}y)\right] \otimes
		\delta_{0}({\rm d}z),
	\\
	\nu
	:= \tfrac12  \Big[\delta_{-1}({\rm d}x) + \delta_{1}({\rm d}x)\Big] \otimes
	\left[\delta_{\frac{1}{\sqrt{2}}}({\rm d}y) + \delta_{-\frac{1}{\sqrt{2}}}({\rm d}y)\right]
	\end{gather*}
	These measures clearly have bounded moments of order up to $2$, and their $x$-marginals are the volume and surface measures by \cref{e:marginal-constraints-mu,e:marginal-constraints-nu}. Further, for any $\phi \in \phiSpace{2}$
	\begin{align*}
	\langle \mathcal{D}\phi, \mu \rangle
	&= \frac12  \int_{-1}^{1} \frac{\partial\varphi}{\partial x} \!\left(x,\tfrac{1}{\sqrt{2}}\right) + \frac{\partial\varphi}{\partial x}\!\left(x,-\tfrac{1}{\sqrt{2}}\right) \;\dVolume
	\\
	&= \frac12 \left[ \phi\!\left(1,\tfrac{1}{\sqrt{2}}\right) + \phi\!\left(1,-\tfrac{1}{\sqrt{2}}\right)
	- \phi\!\left(-1,\tfrac{1}{\sqrt{2}}\right) - \phi\!\left(-1,-\tfrac{1}{\sqrt{2}}\right)
	\right]
	\\
	&=\langle \phi \cdot n, \nu \rangle.
	\end{align*}
	In the last step, we used our choice of boundary measure $\nu$ and the fact that the unit normal vector to the boundary of $\Omega=(-1,1)$ is $\hat{n}(x)=x$.
	Similar calculations show that  $\langle y, \mu \rangle = 0$,  $\langle y^2, \mu \rangle = 0$, and $\langle z^2, \mu \rangle = 0$. \Cref{th:relaxation-gap-poincare-mean-zero} is therefore proved.
\end{proof}

%%%%%%%%%%%%%%%%%%%%%%%%%%%%%%%%%%%%%%%%%%%%%%%%%%%%%%%%%%%%%%%%%%%%%%%%
\section{Conclusions}\label{s:conclusions}
We have investigated the problem of finding \textit{a priori} lower bounds on integral minimization problems. We have shown that lower bounds obtained using the occupation measure framework of \cite{Awi2014,Korda2018} can be estimated from below by a dual problem, which coincides with the pointwise dual relaxation developed in \cite{Chernyavskiy2020} using ideas introduced in the context of Lyapunov methods for partial differential equations \cite{Ahmadi2019,Ahmadi2016,Ahmadi2017,Valmorbida2015b,Valmorbida2015c,Valmorbida2015a}.
We have also identified general sufficient conditions ensuring that this duality is strong (\cref{th:strong-duality-noncompact}), in which case the occupation measure and pointwise dual relaxations are the same. Checking our sufficient conditions for strong duality requires one to construct a function $\varphi_0$ satisfying the two coercivity inequalities \cref{e:strong-duality-conditions-coercivity-bulk,e:strong-duality-conditions-coercivity-boundary}.
In applications, these inequalities may simply hold with $\varphi_0=0$. Our choice to include non-zero $\phi_0$ allows for more general problems where coercivity is not evident, but can be proved after a translation.
If the constraints imposed on the original problems imply uniform bounds on admissible functions and their gradients, as assumed in \cite{Korda2018,Korda2022}, coercivity is immediate and our strong duality theorem applies.

In the second part of this work, we proved that relaxations via occupation measures are sharp for variational problems with a particular additive convex structure. Our proof uses convex duality, which is an established route to study such variational problems (see, e.g., \cite{EkelandTemam1999}); here, we recognized its relation to the occupation measure framework. 
After this paper was submitted, different measure-theoretic arguments appeared in \cite{hkkz2023arxiv} extending our sharpness result to other convex problems including ones whose integrands do not necessarily have an additive structure.

On the other hand, as the results of this paper show, occupation measure relaxations are not always sharp: the counterexamples in \cref{s:failure} show that sharpness can fail even in the one-dimensional scalar case. The lack of sharpness for vectorial problems such as the double-well example of \cref{ss:failure-double-well} does not come as a surprise, because the linear constraints imposed by the occupation measure framework do not fully characterize the gradient Young measures needed for sharp relaxations \cite{Muller1999,Pedregal1997,Rindler2018}. This fundamental limitation of the method prevents occupation measures from producing sharp lower bounds in general. Moreover, the lack of a simple characterization of quasiconvexity makes it hard to incorporate the Jensen-type inequalities satisfied by gradient Young measures to improve occupation measure relaxations.

Nevertheless, the computational tractability of occupation measure relaxations \cite{Chernyavskiy2020,Korda2018} means that they should be added to the list of available techniques for proving nontrivial \emph{a priori} bounds on integral minimization problems, such as translation and calibration methods as well as polyconvexity (see, e.g., \cite{Buttazzo1998,Dacorogna2008,Firoozye1991,Milton2002composites}).
Future work should clarify the relationship between occupation measures and these well-known techniques.

%%%%%%%%%%%%%%%%%%%%%%%%%%%%%%%%%%%%%%%%%%%%%%%%
\section*{Acknowledgments}
We thank D. Goluskin, J. Bramburger and A. Chernyavsky for many discussions about this work and for offering comments on an early draft. This work also benefited from conversations with A. Wynn, S. Chernyshenko, and G. Valmorbida. We gratefully acknowledge the hospitality and support of the Banff International Research Station; this paper was finished during the Focussed Research Group ``Studying PDE dynamics via optimization with integral inequality constraints'' (\url{http://www.birs.ca/events/2022/focussed-research-groups/22frg243}).

%%%%%%%%%%%%%%%%%%%%%%%%%%%%%%%%%%%%%%%%%%%%%%%%
\section*{Open-access statement}
For the purpose of open access, the authors have applied a ‘Creative Commons Attribution' (CC BY) licence to any Author Accepted Manuscript version arising.

%%%%%%%%%%%%%%%%%%%%%%%%%%%%%%%%%%%%%%%%%%%%%%%%
\section*{Data access statement}
No data were generated or analysed during this study.

%%%%%%%%%%%%%%%%%%%%%%%%%%%%%%%%%%%%%%%%%%%%%%%%%%%%%%%%%%%%%%%%%%%%%%%%%%%%%%%%%%%%%%%%%%%%%%%%%%%%
%% Bibliography
\bibliographystyle{./siamplain}
\bibliography{reflist}

%%%%%%%%%%%%%%%%%%%%%%%%%%%%%%%%%%%%%%%%%%%%%%%%%%%%%%%%%%%%%%%%%%%%%%%%%%%%%%%%%%%%%%%%%%%%%%%%%%%%
%%%%%%%%%%%%%%%%%%%%%%%%%%%%%%%%%%%%%%%%%%%%%%%%%%%%%%%%%%%%%%%%%%%%%%%%%%%%%%%%%%%%%%%%%%%%%%%%%%%%
\end{document}